\newtheorem{theorem}{Theorem}[section]
\newtheorem{lemma}[theorem]{Lemma}
\newtheorem{corollary}[theorem]{Corollary}
\newtheorem{conjecture}[theorem]{Conjecture}
\theoremstyle{definition}
\newtheorem{remark}[theorem]{Remark}
\newtheorem{definition}[theorem]{Definition}
\newcommand{\bt}{\begin{theorem}}
\newcommand{\et}{\end{theorem}}
\newcommand{\N}{\mathbb{N}}
\newcommand{\R}{\mathbb{R}}
\newcommand{\eps}{\varepsilon}
\def\beq{\begin{equation}}
\def\eeq{\end{equation}}
\newcommand{\bp}{\begin{proof}}
\newcommand{\ep}{\end{proof}}
\DeclareMathOperator{\diam}{diam}
\DeclareMathOperator{\cov}{cov}
\DeclareMathOperator{\trind}{ind}
\DeclareMathOperator{\hdim}{dim_{\mathsf{H}}}
\DeclareMathOperator{\tHD}{tHD}
\newcommand{\su}{\subset}
\newcommand{\iH}{\mathcal{H}}
\newcommand{\iM}{\mathcal{M}}
\newcommand{\cont}{\mathfrak{c}}
\renewcommand{\phi}{\varphi}
\title[Hausdorff dimension and Lipschitz maps]
{Hausdorff dimension of metric spaces and
Lipschitz maps onto cubes}
\author{Tam\'as Keleti}
\address{Institute of Mathematics\\
E\"otv\"os Lor\'and University\\
P\'azm\'any P\'eter s. 1/c, 1117
Budapest, Hungary}
\email{tamas.keleti@gmail.com}
\urladdr{http://www.cs.elte.hu/analysis/keleti}
\author{Andr\'as M\'ath\'e}
\address{Mathematics Institute, University of Warwick \\
Coventry, CV4~7AL, United Kingdom}
\email{A.Mathe@warwick.ac.uk}
\urladdr{http://homepages.warwick.ac.uk/~masibe/}
\author{Ond{\v r}ej Zindulka}
\address{Department of Mathematics, Faculty of Civil Engineering,
Czech Technical University, Th\'akurova 7, 160 00 Prague 6, Czech Republic}
\email{zindulka@mat.fsv.cvut.cz}
\urladdr{http://mat.fsv.cvut.cz/zindulka}
\begin{document}
\begin{abstract}
We prove that a compact metric space (or more generally an analytic subset
of a complete separable metric space) of Hausdorff dimension bigger than $k$
can be always mapped onto a $k$-dimensional cube by a Lipschitz map.
We also show that this does not hold for arbitrary separable metric spaces.

As an application we essentially answer a question of Urba\'nski
by showing that the transfinite Hausdorff dimension (introduced by
him) of an analytic subset $A$ of a complete separable metric space is
$\lfloor\hdim A\rfloor$ if $\hdim A$ is finite but not an integer,
$\hdim A$ or $\hdim A-1$ if $\hdim A$ is an integer and at least
$\omega_0$ if $\hdim A=\infty$.
\end{abstract}

\keywords{Lipschitz map, Hausdorff dimension, metric space,
monotone metric space, ultrametric space, analytic, ZFC,
transfinite Hausdorff dimension, H\"older function}


\maketitle

\section{Introduction}

Which compact metric spaces $X$ can be mapped onto a $k$-dimensional cube
by a Lipschitz map? Since Lipschitz maps can increase the $k$-dimensional
Hausdorff measure by at most a constant multiple, a necessary condition is
$\iH^k(X)>0$, where $\iH^k$ denotes the $k$-dimensional Hausdorff measure.
Is this condition sufficient? In 1932
Kolmogorov \cite[last sentence of \S 6]{MR1512805} posed a conjecture
that would imply an affirmative
answer at least for $k=1$ and $X\subset\R^n$. However,
Vitushkin, Ivanov and Melnikov
\cite{MR0154965} (see also \cite{MR1313694} for a less concise proof) constructed
a compact subset of the plane with positive $1$-dimensional Hausdorff measure
that cannot be mapped onto a segment by a Lipschitz map.
Konyagin found in the 90's a simpler construction of an abstract compact
metric space with the same property but he has not published it.

By proving the following result we show that the condition $\hdim X>k$,
which is just a bit stronger than the necessary condition
$\iH^k(X)>0$, is already sufficient.

\begin{theorem}\label{compact}
If $(X,d)$ is a compact metric space with Hausdorff dimension larger than
a positive integer
$k$, then  $X$ can be mapped onto a
$k$-dimensional cube by a Lipschitz map.
\end{theorem}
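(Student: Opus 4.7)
\emph{Strategy.} I would find a compact Cantor-type subset $Y\subset X$ equipped with a well-separated dyadic tree of clusters in natural bijection with the dyadic subdivision of $[0,1]^k$; this gives an $L$-Lipschitz surjection $Y\to[0,1]^k$, which then extends to $X$ by McShane's theorem.

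\emph{Construction.} Fix $s\in(k,\hdim X)$; by Frostman's lemma there is a Borel probability measure $\mu$ on $X$ with $\mu(B(x,r))\le r^s$ for all $x,r$. The goal is, for each $n\ge 0$, to produce $2^{kn}$ pairwise disjoint closed sets $\{C_\sigma:\sigma\in\{1,\ldots,2^n\}^k\}\subset X$ in bijection with the dyadic subcubes $Q_\sigma$ of $[0,1]^k$, satisfying (a) $\diam(C_\sigma)\le L\cdot 2^{-n}$; (b) $\dist(C_\sigma,C_\tau)\ge c\cdot 2^{-n}$ for distinct $\sigma,\tau$ at the same depth $n$; and (c) $C_{\sigma'}\subset C_\sigma$ whenever $Q_{\sigma'}\subset Q_\sigma$. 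Given such a family, $Y=\bigcap_n\bigcup_\sigma C_\sigma$ is nonempty and compact, and defining $f_0:Y\to[0,1]^k$ by letting $f_0(x)$ be the unique point of $\bigcap_n Q_{\sigma_n(x)}$ (where $C_{\sigma_n(x)}$ is the depth-$n$ cluster containing $x$) yields a surjection that, by (a) and (b), is Lipschitz. A coordinate-wise McShane extension post-composed with a Lipschitz retraction $\R^k\to[0,1]^k$ then gives the desired map on $X$.

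\emph{Main obstacle.} The easy part is (a): Frostman's inequality yields at least $c'\cdot 2^{ns}$ clusters at scale $2^{-n}$ in any reasonable net-induced partition $\iP_n$, vastly more than the $2^{kn}$ needed. The hard part is enforcing the separation (b) together with the refinement (c). Since Frostman provides only an upper bound on ball measures, an individual depth-$n$ cluster may carry very little mass and hence admit few well-separated sub-clusters. I expect the resolution to be a twofold reduction: first, pass to a compact subset on which the Frostman measure enjoys two-sided regularity $\mu(B(x,r))\asymp r^s$ at small scales (a standard reduction via thick points of $\mu$); second, work on a sparse subsequence of scales $n_j=jN$ with $N$ so large that inside any cluster of diameter $\sim 2^{-jN}$ the two-sided bound combined with a greedy pigeonhole packing produces at least $2^{kN}$ balls of radius $\sim 2^{-(j+1)N}$, mutually separated by $\sim 2^{-(j+1)N}$. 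Matching these sub-clusters to dyadic cubes in a refinement-consistent bijection is the combinatorial crux.
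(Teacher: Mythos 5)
Your overall architecture is reasonable: a nested family of clusters satisfying (a)--(c) does yield a Lipschitz surjection of $Y$ onto $[0,1]^k$, and the coordinate-wise McShane extension is fine. The genuine gap is the step you dismiss as ``a standard reduction via thick points'': passing to a compact subset on which a Frostman measure enjoys the two-sided bound $\mu(B(x,r))\asymp r^s$. No such reduction exists for general compact metric spaces. Frostman's lemma gives only the upper bound, and the set of $s$-thick points $\{x:\liminf_{r\to 0}\mu(B(x,r))/r^s>0\}$ can be $\mu$-negligible or empty (already for $\mu=\iH^t\llcorner A$ with $0<\iH^t(A)<\infty$ and $t>s$ one has $\mu(B(x,r))/r^s\to 0$ at $\mu$-a.e.\ point); for an inhomogeneous measure there need be no single exponent that works uniformly over all points and scales. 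Without a lower bound on cluster mass your recursion collapses: a cluster of diameter $\delta$ and mass $m$ does contain at least $m/(2\delta')^s$ points of a maximal $\delta'$-separated set, but the mass may concentrate so that the sub-clusters you are forced to keep satisfy no usable lower bound on $\mu(C)/\diam(C)^s$, and this ratio degrades without control from one generation to the next. This is not a technicality: a subset of dimension $>k$ carrying such a well-separated nested tree is essentially the same thing as a subset bi-Lipschitz equivalent to an ultrametric space of dimension $>k$ (compare the tree of equivalence classes in Lemma~\ref{1-mon}), and producing one is precisely the content of the deep Mendel--Naor theorem (Theorem~\ref{t:MN}) that the paper invokes. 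In $\R^n$, or more generally in doubling spaces, regular subsets can indeed be extracted by dyadic stopping-time arguments, and the second remark of Section~\ref{s:canbe} records such an alternative route via Mattila's intersection theorem and Assouad's embedding; but your claim is for arbitrary compact metric spaces, where no elementary substitute is known.

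For comparison, the paper's proof runs: Howroyd's theorem to get a compact subset of positive finite $s$-measure, Mendel--Naor to get a subset of dimension $>k$ bi-Lipschitz to an ultrametric space, Howroyd again to get positive finite $\iH^k$-measure there, then (instead of a direct tree-to-cube map) a linear order compatible with the ultrametric, the cumulative distribution function of a Frostman measure --- a $k$-H\"older map onto an interval --- composed with a $\frac1k$-H\"older Peano curve, and finally a Lipschitz extension. If you grant yourself the ultrametric subset, your cluster construction becomes essentially correct; the missing ingredient is exactly that subset.
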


In fact we shall prove a more general result (Theorem~\ref{analytic})
by allowing not only compact metric spaces but also Borel, or even analytic
subsets of complete separable metric spaces.
But some assumption about the metric space is needed: we show
(Theorem~\ref{t:sep}) that
there exist separable metric spaces with arbitrarily large Hausdorff dimension
that cannot be mapped onto a segment by a uniformly continuous function.
Surprisingly, our construction depends on a set theoretical
hypothesis that is independent of the standard ZFC axioms: if less than
continuum many sets of first category cannot cover the real line then we can
give an example in $\R^n$ (Theorem~\ref{t:constr1}), otherwise our example is a separable metric space
of cardinality less than continuum (Theorem~\ref{t:constr2}).

Recall that an \emph{ultrametric space} is a metric space in which the triangle
inequality is replaced with the stronger inequality
$d(x,y)\le \max(d(x,z),d(y,z))$. For compact ultrametric spaces we get
the following simple answer to the first question of the introduction.
\begin{theorem}\label{t:ultrametric}
A compact ultrametric space
can be mapped onto a $k$-dimensional
cube by a Lipschitz map if and only if
it has positive $k$-dimensional Hausdorff measure.
\end{theorem}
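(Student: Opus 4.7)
Necessity is immediate: a Lipschitz surjection $f\colon X\to[0,1]^k$ gives $\iH^k([0,1]^k)\le\Lip(f)^k\,\iH^k(X)$, so $\iH^k(X)>0$.

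For sufficiency, let $X$ be compact ultrametric with $\iH^k(X)>0$. The plan is to exploit the hierarchical structure of ultrametric spaces: for every $r>0$ the closed balls of radius $r$ form a finite (by compactness) partition $\iP_r$ of $X$, and $\iP_{r'}$ refines $\iP_r$ for $r'\le r$, yielding a canonical rooted tree.

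First I would construct a Borel probability measure $\mu$ on $X$ with $\mu(B(x,r))\le Cr^k$ for some $C>0$. In the ultrametric setting this reduces to a direct mass distribution on the ball-tree, with $\iH^k(X)>0$ being precisely what makes such a distribution possible.

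Next, fix $\la\in(0,1)$, set $r_n=r_0\la^n$ (with $r_0=\diam X$), let $\iP_n:=\iP_{r_n}$, and partition $[0,L]^k$ into subcubes $\mathcal{Q}_n$ of side $\rho r_n$ (with $L,\rho$ to be chosen). I would build inductively compatible surjections $\sigma_n\colon\iP_n\to\mathcal{Q}_n$ (where $B'\subseteq B$ forces $\sigma_{n+1}(B')\subseteq\sigma_n(B)$) and then define
\[
f(x):=\bigcap_n\sigma_n(B_n(x))\in[0,L]^k,
\]
with $B_n(x)\in\iP_n$ the ball containing $x$. Two points $x,y$ with $d(x,y)\in(r_{n+1},r_n]$ lie in a common $\iP_n$-ball and so in a common $\mathcal{Q}_n$-subcube, giving $|f(x)-f(y)|\le\sqrt{k}\,\rho r_n\le(\sqrt{k}\rho/\la)\,d(x,y)$; hence $f$ is Lipschitz. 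Surjectivity is forced by the surjectivity of each $\sigma_n$ together with the compactness of $X$: $f(X)$ is closed and meets every subcube at every level, so $f(X)=[0,L]^k$, which after rescaling gives a Lipschitz surjection onto $[0,1]^k$.

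The \textbf{main obstacle} is the inductive construction of the $\sigma_n$. The Frostman bound yields $|\iP_n|\ge\mu(X)/(Cr_n^k)$; choosing $L$ small enough in terms of $C,\rho,\mu(X)$ makes this exceed $|\mathcal{Q}_n|=(L/(\rho r_n))^k$, providing enough balls in aggregate. The nontrivial local step---showing that for every $Q\in\mathcal{Q}_n$ the children in $\iP_{n+1}$ of the balls in $\sigma_n^{-1}(Q)$ can be distributed onto the $\la^{-k}$ sub-subcubes of $Q$ so that each sub-subcube is hit, compatibly over all $Q$---is carried out by maintaining throughout the induction the invariant that $\mu(\sigma_n^{-1}(Q))$ is comparable to the Lebesgue proportion of $Q$ in $[0,L]^k$. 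Combined with $\mu(B)\le Cr^k$ on individual children, this invariant enables a bin-packing/pigeonhole distribution at every level. The rigidity of the ultrametric ball-tree (balls at any two scales are either disjoint or nested) is what keeps this scheme globally consistent, and this is the point at which the ultrametric hypothesis is genuinely used, in contrast to Theorem~\ref{compact}, whose proof for general compact metric spaces requires the strictly stronger assumption $\hdim X>k$.
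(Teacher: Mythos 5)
Your necessity argument is fine, and the first step of your sufficiency argument (a Frostman measure $\mu$ with $\mu(B(x,r))\le Cr^k$, which exists since $\iH^k(X)>0$) matches the paper. But the step you yourself flag as the ``main obstacle'' is a genuine gap, and I do not believe it closes in the form you propose. The trouble is quantitative: a single ball of $\iP_{n+1}$ may carry mass as large as $Cr_{n+1}^k$, which is a \emph{fixed positive fraction} $CL^k/(\mu(X)\rho^k)$ of the proportional quota $\mu(X)(\rho r_{n+1})^k/L^k$ of a level-$(n+1)$ subcube, and this fraction does not improve as $n$ grows. Writing $e_n(Q)$ for the deviation of $\mu(\sigma_n^{-1}(Q))$ from the proportional share $p_n=\mu(X)(\rho r_n)^k/L^k$ and $N=\la^{-k}$ for the branching of the cube tree, the best any assignment of \emph{whole} balls to sub-subcubes can guarantee is $|e_{n+1}|\le |e_n|/N+Cr_{n+1}^k$; since $Cr_{n+1}^k=Cr_0^kN^{-(n+1)}$, every level contributes the same amount after dilution and the accumulated deviation can reach $nCr_0^kN^{-n}$, which overtakes $p_n$ (itself a constant times $N^{-n}$) after roughly $\mu(X)\rho^kr_0^k/(CL^kr_0^k)$ levels. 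At that point some subcube receives no ball, the invariant breaks, and surjectivity fails; shrinking $L$ only postpones the breakdown by a bounded number of levels. The natural variants (``each $Q$ receives mass at least $cr_n^k$'', or at least $Nq_{n+1}+(N-1)Cr_{n+1}^k$) lead to recursions whose constants are forced to decrease to zero and below, so they do not close either. This is not a technicality: only the upper Frostman bound is available (no Ahlfors regularity), and the rigidity of insisting that every ball land inside a single nested subcube is exactly the kind of constraint that the Vitushkin--Ivanov--Melnikov example shows can be unsatisfiable when one only knows $\iH^k(X)>0$.

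The paper circumvents this entirely by not mapping to the cube directly. It first shows (Lemma~\ref{1-mon}) that the ball tree you describe carries a lexicographic linear order making $X$ $1$-monotone, and then (Theorem~\ref{t:Holder}) maps $X$ onto a nondegenerate \emph{interval} by the $k$-H\"older distribution function $g(x)=\mu((-\infty,x))$. Surjectivity of $g$ onto an interval is obtained by a no-gap argument --- a gap $(u,v)$ in $g(X)$ would force a set of at most two points to have $\mu$-measure at least $v-u>0$, impossible since $\mu$ has no atoms --- that is, by an intermediate-value mechanism requiring no control whatsoever on how mass is apportioned among subintervals. Composing with a $\frac1k$-H\"older Peano curve (Corollary~\ref{c:mon_to_cube}) then yields the Lipschitz surjection onto the cube; in effect this allows the image of a small ball to straddle the boundaries of your subcubes, which is precisely the flexibility your nested assignment forbids. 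I would rework your argument along these lines: linearize the tree by the order, push $\mu$ forward by its cumulative distribution function, and lift by a Peano curve.
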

In fact, we prove (Corollary~\ref{c:mon_to_cube}) the above result for
the following more general (see Lemma~\ref{1-mon})
class of metric spaces.

\begin{definition}\label{d:mon}
A metric space $(X,d)$ is called \emph{monotone} if
there exists a linear order $<$ and a
constant $C$ such that
\begin{equation}\label{e:mon}
\diam([a,b])\le C \cdot d(a,b) \qquad (\forall a,b\in X),
\end{equation}
where $[a,b]$ denotes the closed interval
$\{x\in X: a\le x \le b\}$.

If this holds for a given $C$ then we can also say that the metric space
is \emph{$C$-monotone}.
\end{definition}

This notion has been introduced recently by the third author
in ~\cite{ZinCursed} and it was studied by him and
Nekvinda~\cite{MR2822419,NZOrder}.
Our results about monotone spaces are closely related to some of the
results of the mentioned papers but for the sake of
completeness we present here brief self-contained proofs.
Some further closely related
results will be published in \cite{ZinHolder} by the third author.

In section~\ref{sec:urb} we use our results to discuss the
\emph{Urba\'{n}ski conjecture}:
In his paper~\cite{MR2556034} Urba\'{n}ski defined the \emph{transfinite Hausdorff dimension}
$\tHD(X)$ of a separable metric space $X$ as the largest possible topological
dimension of a Lipschitz image of a subset of $X$ (cf.~\eqref{tHD}).
He proved that
if the Hausdorff dimension of $X$ is finite then
it is an upper bound for the
transfinite Hausdorff dimension of $X$ (cf. Theorem~\ref{t:urbanski})
and conjectured that, roughly speaking,
it is close to a lower bound (cf. Conjecture~\ref{c:urbanski}).
We show that the conjecture is correct for analytic subsets
of complete separable metric spaces, but
consistently fails in general.

\section{Nice large metric spaces can be mapped onto cubes}
\label{s:canbe}

First we prove the following result about mapping
monotone metric spaces onto an interval by a
H\"older function.

\begin{theorem}\label{t:Holder}
If $(X,d)$ is a compact
monotone metric space with positive 
$s$-dimensional
Hausdorff measure (for some $s>0$)
then $X$ can be mapped onto a non-degenerate interval by an $s$-H\"older
function.
\end{theorem}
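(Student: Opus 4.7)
My plan is to build the required $s$-Hölder surjection as the cumulative distribution function of a Frostman measure taken with respect to the monotone order, and then show that its image is an interval.

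Since $X$ is compact with $\iH^s(X)>0$, Frostman's lemma supplies a finite nontrivial Borel measure $\mu$ on $X$ satisfying $\mu(B(x,r))\le r^s$ for all $x\in X$ and $r>0$; because $s>0$ such a $\mu$ is automatically non-atomic. Letting $<$ be a linear order witnessing $C$-monotonicity, I define $f:X\to[0,\mu(X)]$ by
\[ f(x)=\mu(\{y\in X:y\le x\}). \]
For $a<b$ in $X$ the order-interval $[a,b]$ is contained in a ball of radius $\diam[a,b]$ about $a$, so $\mu([a,b])\le(\diam[a,b])^s$ by Frostman; combined with $\diam[a,b]\le C\,d(a,b)$ this gives
\[ |f(b)-f(a)|=\mu((a,b])\le\mu([a,b])\le C^s\,d(a,b)^s, \]
so $f$ is $s$-Hölder.

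The main step --- and the principal obstacle --- is showing that the image $f(X)$ is a non-degenerate interval. My plan is to analyse the push-forward $f_*\mu$ on $\R$. Because $f$ behaves like a CDF along the order, $f(x_2)-f(x_1)=\mu((x_1,x_2])$ whenever $x_1\le x_2$. First I would use $C$-monotonicity to show that closed subsets of $X$ possess order-extrema: the bound $\diam[x_n,x^*]\le C\,d(x_n,x^*)$ forces any metric cluster point of an order-decreasing sequence in the set to lie at or below every term (else $x_n$ would coincide in metric with the cluster point), and a transfinite argument using second countability then delivers the extremum. It follows that each level set $f^{-1}(y)$ is contained in an order-interval $[x_{\min},x_{\max}]$ of $\mu$-measure $f(x_{\max})-f(x_{\min})=y-y=0$, so $f_*\mu$ is atomless. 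A direct CDF computation in the order then gives $f_*\mu([a,b])\le b-a$ on every interval, hence $f_*\mu\le\iL$; this yields $\iL(f(X))\ge f_*\mu(f(X))=\mu(X)>0$. Since $f(X)$ is a closed subset of $[\min f,\max f]$ with full Lebesgue measure there, $f(X)=[\min f,\max f]$: a non-degenerate interval of length $\mu(X)$.

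The technical heart of the proof is the existence of order-extrema for closed subsets in a monotone compact space, and it may streamline matters to reduce first to the case $C=1$ via Lemma~\ref{1-mon}.
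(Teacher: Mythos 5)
Your proposal is correct in outline, and for the crucial step --- showing the image is an interval --- it takes a genuinely different route from the paper. Both proofs begin identically: a Frostman measure $\mu$ with $\mu(E)\le(\diam E)^s$, the order-CDF $g(x)=\mu(\{y:y<x\})$ (you use $y\le x$, which changes nothing), and the H\"older estimate via $\diam[a,b]\le C\,d(a,b)$. For surjectivity the paper argues by contradiction: if $(u,v)$ were a gap in $g(X)$, splitting a countable dense set $S$ into $\{g\le u\}$ and $\{g\ge v\}$ produces a set $D=\bigcap_{t\in S_2}(-\infty,t)\setminus\bigcup_{s\in S_1}(-\infty,s)$ with $\mu(D)\ge v-u$ yet at most two points. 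You instead show $f_*\mu$ is atomless and dominated by Lebesgue measure, so $f(X)$ is a compact subset of $[0,\mu(X)]$ of full measure, hence the whole interval. Your route is quantitatively cleaner (it identifies $f(X)=[0,\mu(X)]$ and the inequality $f_*\mu\le\iL$ is of independent interest), but it leans on a stronger structural fact: that every closed (or at least every closed order-convex) subset of a compact monotone space has order-extrema. The paper only needs this for $X$ itself, plus separability.

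That extremum lemma is the one soft spot in your write-up. The claim is true, but your sketch (cluster points of decreasing sequences, then ``a transfinite argument using second countability'') is not a proof as it stands. The clean argument is the one implicit in the paper: the monotonicity inequality makes every order-ray $(a,\infty)$ and $(-\infty,b)$ open (if $d(x,y)<d(a,x)/C$ and $y\le a<x$ then $a\in[y,x]$ would force $\diam[y,x]\ge d(a,x)$, contradicting $\diam[y,x]\le C\,d(x,y)$), and a compact set with no minimal element would be covered by the rays $(y,\infty)$, hence by finitely many, hence by a single one --- which omits its own endpoint. Applied to the compact order-convex sets $f^{-1}(y)$ and $f^{-1}([a,b])$ this delivers exactly what you need, so your argument closes. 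One incidental correction: Lemma~\ref{1-mon} states that compact \emph{ultrametric} spaces are $1$-monotone; it provides no reduction of a general $C$-monotone space to the case $C=1$, so that remark should be dropped (the constant $C$ is harmless anyway, as your H\"older bound already shows).
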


\begin{proof}
By Frostman lemma (see e.g. in \cite[Theorem 8.17]{MR1333890}) we can choose a nonzero finite Borel
measure $\mu$ on $X$ so that
$\mu(E)\le(\diam(E))^s$ for any $E\subset X$.
Since $X$ is a monotone metric space there exists a linear order $<$ and a
constant $C$ such that (\ref{e:mon}) of Definition~\ref{d:mon} holds.
It is easy to show (see also in \cite{NZOrder})
that any open interval $(a,b)=\{x\in X: a<x<b\}$ is open,
so any interval of $X$ is Borel.
For $x\in X$ let $g(x)=\mu((-\infty,x))$, where
$(-\infty,x)=\{y\in X : y<x \})$.
Then $g$ is $s$-H\"older since for any $a,b\in X$, $a<b$ we have
$$
0\le g(b)-g(a)=\mu([a,b))\le \diam([a,b))^s \le (C \cdot d(a,b))^s.
$$
Thus $g$ is continuous, so $g(X)\subset\R$ is compact.

Now we show that $g(X)$ is not a singleton.
Since $X$ is compact and the intervals of the form $(-\infty,b)$ and 
$(a,\infty)$ are open, 
there exists a minimal element $x_{-}$ 
and a maximal element $x_{+}$ in $(X,<)$. 
Then $g(x_{-})=\mu(\emptyset)=0$
and $g(x_{+})=\mu((-\infty,x_{+}))=\mu(X\setminus\{x_{+}\})$. 
Since $\mu(X)>0$ and $\mu(\{x_{+}\})\le (\diam (\{x_{+}\}))^s=0$
we get that indeed $g(x_{+})>g(x_{-})$.
 
Therefore all we need to prove is that there is no $u,v\in g(X)$ with
$u<v$ and $(u,v)\cap g(X)=\emptyset$.
Suppose there exist such  $u$ and $v$.
Since $X$ is a compact metric space, it is separable, let $S$ be a
countable dense
subset of $X$. Let $S_1 = \{s\in S: g(s)\le u\}$ and
$S_2=\{t\in S: g(t)\ge v\}$. Since  $(u,v)\cap g(X)=\emptyset$ we must
have $S=S_1\cup S_2$.
Since $S_1$ and $S_2$ are countable and $g(x)=\mu((-\infty,x))$ we get
that
$\mu\bigl(\bigcup_{s\in S_1}(-\infty,s)\bigr)\le u$ and
$\mu\bigl(\bigcap_{t\in S_2}(-\infty,t)\bigr)\ge v$.
Hence letting $D=\bigcap_{t\in S_2}(-\infty,t) \setminus \bigcup_{s\in S_1}(-\infty,s))$
we get $\mu(D)\ge v-u>0$.
On the other hand, the set $D$ must have at most two points since
if $x,y,z\in D$ and $x<y<z$ then $(x,z)$ would be a nonempty open set
not containing any point of the dense set $S=S_1\cup S_2$.
Since $\mu(E)\le(\diam(E))^s$ for any $E\subset X$, the singletons must have
zero $\mu$ measure, so we get
$\mu(D)=0$ contradicting the previously obtained $\mu(D)>0$.
\end{proof}

\begin{corollary}\label{c:mon_to_cube}
Let $X$ be a compact monotone metric space and let $k$ be a positive integer.
Then $X$ can be mapped onto the $k$-dimensional
cube $[0,1]^k$ by a Lipschitz map if and only if
$X$ has positive $k$-dimensional Hausdorff measure.
\end{corollary}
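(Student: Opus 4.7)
The necessity direction looks routine: if $f \colon X \to [0,1]^k$ is a surjective Lipschitz map with constant $L$, the usual behaviour of Hausdorff measure under Lipschitz maps gives $0 < \iH^k([0,1]^k) \le L^k\, \iH^k(X)$, so $\iH^k(X) > 0$. This takes only a line or two.

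For sufficiency, my plan is to combine Theorem~\ref{t:Holder} with a Peano-type space-filling curve. Assuming $\iH^k(X) > 0$, I would first apply Theorem~\ref{t:Holder} with $s = k$ to produce a $k$-H\"older function $g \colon X \to \R$ whose image is a non-degenerate compact interval; after an affine change of coordinates I may take $g(X) = [0,1]$. Next, I would take a surjection $\gamma \colon [0,1] \to [0,1]^k$ which is $\tfrac{1}{k}$-H\"older (classical Peano-type construction; the exponent $\tfrac{1}{k}$ is sharp since $\iH^k([0,1]^k) > 0$). The composition $f = \gamma \circ g$ is surjective onto $[0,1]^k$, and for all $x, y \in X$, writing $C_1$ and $C_2$ for the H\"older constants of $\gamma$ and $g$,
\[
\abs{f(x) - f(y)} \le C_1\, \abs{g(x) - g(y)}^{1/k} \le C_1 \bigl(C_2\, d(x,y)^k\bigr)^{1/k} = C_1 C_2^{1/k}\, d(x,y),
\]
so $f$ is Lipschitz.

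The main conceptual point is that the H\"older exponent $s = k$ provided by Theorem~\ref{t:Holder} is matched precisely with the optimal Peano exponent $1/k$ for surjections onto the $k$-cube, so that the composition becomes Lipschitz rather than merely H\"older. Beyond this matching, I expect no further obstacle: Theorem~\ref{t:Holder} already does the real work, and the case $k = 1$ is trivial by taking $\gamma$ to be the identity on $[0,1]$.
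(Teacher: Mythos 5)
Your argument is correct and is essentially the paper's own proof: both directions are handled the same way, with sufficiency obtained by applying Theorem~\ref{t:Holder} with $s=k$ and composing the resulting $k$-H\"older surjection onto $[0,1]$ with a $\tfrac{1}{k}$-H\"older Peano curve. The only (harmless) additions on your side are the explicit affine rescaling of $g(X)$ to $[0,1]$ and the written-out H\"older composition estimate, which the paper leaves implicit.
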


\begin{proof}
It is clear that $\iH^k(X)>0$ is a necessary condition.

To prove that it is sufficient note that
by the previous theorem there exists a $k$-H\"older map $g:X\to\R$ such that
$g(X)=[0,1]$. It is well-known 
(see e.g. \cite[Theorem 4.55]{milne})
that there exists a $\frac{1}{k}$-H\"older Peano
curve $h:[0,1]\to[0,1]^k$. 
Then the composition $h\circ g$ is a Lipschitz map that maps $X$ onto
$[0,1]^k$.
\end{proof}

Nekvinda and Zindulka \cite{NZOrder} proved that an
ultrametric space is always a monotone metric space.
For compact ultrametric spaces even the following stronger result
can be proved fairly easily.

\begin{lemma}\label{1-mon}
Any compact ultrametric space $(X,d)$ is $1$-monotone.
\end{lemma}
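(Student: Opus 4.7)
The plan is to build a linear order on $X$ that is compatible with the nested hierarchy of closed balls. First, I would record the standard fact that for each $r > 0$, the relation $x \sim_r y \iff d(x,y) \le r$ is an equivalence relation whose classes are closed balls of diameter at most $r$; call the resulting partition $\iP_r$. Compactness makes each $\iP_r$ finite, and since, as $r$ grows past any $\varepsilon > 0$, the partition can coarsen at most $|\iP_\varepsilon| - 1$ times (each coarsening occurring at a distance), the set of positive distances $D^+ = \{d(x,y) : x \ne y\}$ is countable with $0$ as the only possible accumulation point. I would enumerate $D^+$ as $r_1 > r_2 > \cdots$ with $r_1 = \diam X$, and write $\iP_n := \iP_{r_n}$, so that $\iP_1 = \{X\}$ and each $\iP_{n+1}$ strictly refines $\iP_n$.

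Next, I would inductively equip each $\iP_n$ with a linear order $\prec_n$: start from the trivial $\prec_1$, and pass from $\prec_n$ to $\prec_{n+1}$ by choosing an arbitrary linear order on the (finitely many) $\iP_{n+1}$-children of each $\iP_n$-cell and combining with $\prec_n$ lexicographically. This induces a relation on $X$ by declaring $x < y$ iff the $\iP_{k+1}$-cell containing $x$ precedes the one containing $y$ in $\prec_{k+1}$, where $k$ is the unique index with $d(x,y) = r_k$ (so that $x$ and $y$ share their $\iP_k$-cell but not their $\iP_{k+1}$-cell).

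Verification will rely on the ultrametric isoceles property: among any three points, two of the three pairwise distances are equal and dominate the third. For transitivity, a short case analysis on which pair among $d(x,y), d(y,z), d(x,z)$ is the larger will yield $x < z$ from $x < y < z$; the only delicate configuration is $d(x,y) = d(y,z) > d(x,z)$, which turns out to be incompatible with $x < y < z$ because it would force the $\iP_{k+1}$-cell of $y$ both to precede and to follow the shared $\iP_{k+1}$-cell of $x$ and $z$. For the diameter bound, fix $a < b$, let $r_k = d(a,b)$ and let $C \in \iP_k$ contain $\{a,b\}$; for any $u \notin C$ the isoceles property forces $d(a,u) = d(b,u) > r_k$, so $u$ has the same branching level $m \le k$ with both $a$ and $b$, and since $a,b$ share a cell at every level up to $k$, the level-$m$ comparison places $u$ on the same side of both. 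Thus $[a,b] \subseteq C$, and $\diam([a,b]) \le \diam(C) \le r_k = d(a,b)$. I expect the transitivity check to be the main obstacle: the lexicographic construction of the $\prec_n$ is precisely what defeats the troublesome isoceles configuration, and a less coherent choice of level-orders could easily yield a non-transitive relation.
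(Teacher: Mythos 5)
Your proof is correct and follows essentially the same route as the paper's: both build the hierarchical clopen partition tree arising from the ultrametric equivalence relations (the paper recurses on $d(x,y)<\diam$ of the current piece, you index by the countably many positive distance values), order $X$ lexicographically along that tree, and conclude by showing $[a,b]$ is contained in the last cell containing both $a$ and $b$, whose diameter equals $d(a,b)$. The only (trivial) loose end is that when $X$ is finite the level $\iP_{k+1}$ does not exist for the minimal distance $r_k$; one just appends the discrete partition.
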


\begin{proof}
The following tree structure is well-known but as it can be obtained
quickly we give a self-contained proof.

Let $D=\diam X$. If $D=0$ then let $k=1$ and $X_1=X$.
Otherwise, since $d$ is an ultrametric,
the relation $d(x,y)<D$ is an equivalence relation. The equivalence classes
are open, $X$ is compact, so
there are only finitely many equivalence classes: $X_1,\ldots,X_k$.
Thus $X_1,\ldots,X_k$ are closed and compact as well.
Since $X$ is compact, there exist two points with distance $D$, so $k\ge 2$,
unless $D=0$.
Note also that if $a$ and $b$ are from distinct equivalence classes then
$d(a,b)=D$.

Since each equivalence class is a compact ultrametric space we can do the same
for each of them. This way we get a tree of clopen sets $X_{i_1\ldots i_m}$
with the property that
for a fixed $m$ these sets give a partition of $X$ and
if $a\in X_{i_1\ldots i_m j}$, $b\in X_{i_1\ldots i_m j'}$
and $j\neq j'$ then $d(a,b)=\diam(X_{i_1\ldots i_m})$.
The first property implies that for any $x\in X$ there exists a unique
sequence $J(x)=(i_1,i_2\ldots$), so that $x\in X_{i_1\ldots i_m}$ for every $m$.
On the other hand, $\lim_{m\to\infty}\diam(X_{i_1\ldots i_m})=0$,
since otherwise picking one point from each
$X_{i_1\ldots i_{m}}\setminus X_{i_1\ldots i_{m+1}}$ we would get an infinite discrete
subspace. 
Hence $\{x\}=\bigcap_{m=1}^\infty X_{i_1\ldots i_m}$,
therefore the function $J$ is injective. 

This injectivity of $J$ ensures that the following pull-back
of the lexicographic order via $J$ is an order on $X$:
let $x<y$ if $J(x)$ is smaller than
$J(y)$ in the lexicographical order. 
We need to show
that for any $a,b\in X$ we have
$\diam([a,b])=d(a,b)$. Let $(i_1,\ldots,i_m)$ be the longest common
initial segment of $a$ and $b$. Then, as we saw above,
$d(a,b)=\diam(X_{i_1\ldots i_m})$. On the other hand
$[a,b]\subset X_{i_1\ldots i_m}$, so we get
$
d(a,b)\le\diam([a,b])\le\diam(X_{i_1\ldots i_m})=d(a,b),
$
which completes the proof.
\end{proof}

The following result is a weaker version of \cite[Theorem 1.4]{MendelNaor}.

\begin{theorem}[Mendel and Naor \cite{MendelNaor}]\label{t:MN}
For every compact metric space $(X,d)$ and $\eps>0$ there exists a closed
subset $Y\subset X$ such that $\hdim Y\ge (1-\eps)\hdim X$ and $(Y,d)$
is bi-Lipschitz equivalent to an ultrametric space.
\end{theorem}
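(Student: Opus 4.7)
The plan is to construct $Y$ as the ``deep interior'' of a hierarchical partition of $X$ at geometric scales; the resulting subset will inherit a tree metric that is bi-Lipschitz to the original metric. First, I would fix $s$ with $(1-\eps)\hdim X \le s < \hdim X$ and apply Frostman's lemma (as in the proof of Theorem~\ref{t:Holder}) to obtain a nonzero finite Borel measure $\mu$ on $X$ satisfying $\mu(E) \le (\diam E)^s$ for every $E \su X$. Also fix parameters $\rho \in (0,1)$ and $\delta \in (0,\rho/2)$ whose exact values will be chosen later.

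Next, I would build a nested sequence of Borel partitions $\iP_0 \succ \iP_1 \succ \cdots$, where each cell $C\in\iP_k$ has diameter at most $\rho^k$, produced greedily from maximal $\rho^k$-separated nets so that the unique cell $C(x,k)\in\iP_k$ containing a point $x\in X$ is determined by the nearest net point. For each cell $C\in\iP_k$ define its \emph{core} $C^*=\{x\in C: \dist(x,X\sm C)\ge\delta\rho^k\}$, and call $x$ \emph{safe at level $k$} if $x\in C(x,k)^*$. Take $Y$ to be the closure of the set $Y_0$ of points safe at every level $k\ge 0$.

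For the ultrametric property, given distinct $x,y\in Y_0$, let $k$ be the largest index with $C(x,k)=C(y,k)$. On the one hand $d(x,y)\le\diam C(x,k)\le\rho^k$; on the other hand, since $C(x,k+1)\ne C(y,k+1)$ and $x$ is safe at level $k+1$, we get $d(x,y)\ge\dist(x,X\sm C(x,k+1))\ge\delta\rho^{k+1}$. Hence the tree ultrametric $u(x,y):=\rho^k$ satisfies $\delta\rho\cdot u\le d\le u$ on $Y_0$, and by continuity this bi-Lipschitz equivalence extends to $Y$, exhibiting $(Y,d)$ as bi-Lipschitz equivalent to an ultrametric space.

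The main obstacle is establishing $\hdim Y\ge s$. The natural route is probabilistic: randomize the choice of each net $N_k$ (for example via random seed points) so that, for each fixed $x$ and each level $k$, the conditional probability that $x$ is unsafe at level $k$, given the history of earlier levels, is bounded by some $c(\delta)$ that tends to $0$ as $\delta\to 0$. A Fubini argument then yields a realization of the randomness for which the $\mu$-measure of the unsafe set is at most $\mu(X)/2$; the restriction $\mu|_{Y_0}$ is then nonzero and still obeys the Frostman bound $\mu(E)\le(\diam E)^s$, forcing $\hdim Y\ge\hdim Y_0\ge s\ge (1-\eps)\hdim X$. Making the conditional independence between scales precise --- the technical heart of the Mendel--Naor argument --- is the crux of the proof.
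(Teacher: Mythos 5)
The paper offers no proof of this statement: Theorem~\ref{t:MN} is quoted as a (weakened) form of \cite[Theorem 1.4]{MendelNaor} and used as a black box, so the only honest comparison is between your sketch and the actual Mendel--Naor argument. Your sketch follows the natural ``random hierarchical partitions with padded cores'' strategy, and the bi-Lipschitz-to-ultrametric part (the chain $\delta\rho\cdot u\le d\le u$ on the everywhere-safe set) is fine. But the step you defer as ``the technical heart'' is not merely technical: as formulated it fails. You bound the probability that a fixed $x$ is unsafe at a fixed level $k$ by a constant $c(\delta)>0$, yet you need $x$ to be safe at \emph{every} level $k\ge 0$, and $\sum_{k\ge 0}c(\delta)=\infty$ for any fixed $\delta$. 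The first-moment/Fubini bound on the measure of the unsafe set therefore diverges, and heuristically $\mu(Y_0)\approx\mu(X)\prod_{k\ge 0}(1-c(\delta))=0$: for random nets the everywhere-padded set typically carries no measure at all. Letting $\delta=\delta_k\to 0$ fast enough to restore summability destroys the uniform lower bound $d(x,y)\ge\delta\rho^{k+1}$ and with it the bi-Lipschitz equivalence.

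There is also a structural reason your route cannot be repaired in this form: it would prove too much. Your argument, if it closed, would give for \emph{every} $s<\hdim X$ a subset of dimension at least $s$ that is bi-Lipschitz equivalent to an ultrametric space with distortion $1/(\delta\rho)$ not depending on how close $s$ is to $\hdim X$. Mendel and Naor prove in the same paper that this is impossible: the $(1-\eps)$ dimension loss is genuinely necessary, and the distortion must blow up as $\eps\to 0$. Accordingly, their actual proof does not preserve the full Frostman exponent $s$ on the surviving set while discarding only a fixed fraction of the \emph{measure}; instead it sacrifices a fixed fraction of the \emph{exponent} at each scale, via an intricate weighted combinatorial lemma on finite metric spaces followed by a limiting argument, and the compounded exponent loss is precisely the source of the factor $1-\eps$. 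That mechanism is absent from your sketch, so the dimension lower bound --- the whole content of the theorem --- remains unproved.
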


We will also need the following result.

\begin{theorem}[Howroyd \cite{MR1317515}]
Let $s>0$. If an analytic subset of 
a separable complete metric space 
is of infinite $s$-dimensional Hausdorff meausure
then it has a compact subset of finite and positive
$s$-dimensional Hausdorff measure.
\end{theorem}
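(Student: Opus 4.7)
The plan is to split the task into two stages: reduce from analytic sets to compact sets of large $\iH^s$ measure, and then extract from a compact set of (possibly infinite) positive $\iH^s$ measure a compact subset of finite positive $\iH^s$ measure.

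For the first stage, I would appeal to Choquet's capacitability theorem. In a complete separable metric space the Hausdorff $s$-measure, or more safely the net-measure analogue built from a countable family of balls, can be set up as a Choquet capacity for which compact sets are capacitable and analytic sets are universally capacitable. This yields the inner regularity
\[
\iH^s(A)=\sup\{\iH^s(K):K\subset A,\ K\text{ compact}\}.
\]
Since $\iH^s(A)=\infty$, we may pick a compact subset $K\subset A$ with $\iH^s(K)$ larger than any prescribed finite constant (in particular strictly positive). From this point on the problem lives on a compact metric space.

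For the second stage the task is to show that any compact metric space $K$ with $\iH^s(K)>0$ (allowing the value $+\infty$) contains a compact subset $K'$ with $0<\iH^s(K')<\infty$. I would construct a nested sequence $K\supset K_1\supset K_2\supset\dots$ of compact subsets together with scales $\delta_n\downarrow 0$ such that the Hausdorff $\delta_n$-premeasure $\iH^s_{\delta_n}(K_n)$ stays bounded below by a fixed $c>0$, while $K_n$ simultaneously admits a cover by sets of diameter at most $\delta_n$ whose $s$-weights sum to at most a fixed $M<\infty$. Then $K'=\bigcap_n K_n$ is compact, and a standard upper semicontinuity argument for $\iH^s_\delta$ on decreasing intersections of compacta gives $c\le\iH^s(K')\le M$.

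The construction of $K_n$ from $K_{n-1}$ is the heart of the matter and the main obstacle. Starting from a near-optimal $\delta_n$-cover of $K_{n-1}$, one must discard those pieces on which the local $s$-density would otherwise blow up, yet retain enough of them to preserve the lower content bound. In Euclidean space Besicovitch's classical density theorems handle the balancing for free, but in a general metric space one has to substitute a Vitali-type selection argument in tandem with Howroyd's weighted Hausdorff and net measures in order to maintain both bounds uniformly across all scales.
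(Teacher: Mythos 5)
The paper does not prove this statement at all: it is imported verbatim as Howroyd's theorem \cite{MR1317515} (the metric-space generalization of the Besicovitch--Davies ``subsets of finite measure'' theorem), so there is no internal proof to compare yours against. Judging your sketch on its own merits, it identifies the right circle of ideas but has genuine gaps in both stages. In Stage 1, the appeal to Choquet's capacitability theorem does not go through as stated: $\iH^s$ is \emph{not} a Choquet capacity, because it fails continuity from above on decreasing sequences of compact sets (e.g.\ finite unions of intervals of total length $1$ shrinking to a Lebesgue-null Cantor set have $\iH^1$ identically $1$ but intersection of measure $0$). The displayed inner-regularity identity, in the strong form you actually use --- compact subsets of arbitrarily large \emph{finite} measure --- is essentially the theorem itself. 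The known proofs circumvent this by replacing $\iH^s$ with a comparable set function that does satisfy an increasing-sets lemma (dyadic net measures in $\R^n$ for Davies; weighted Hausdorff measures plus a Hahn--Banach argument in general complete separable metric spaces for Howroyd) and then running the capacitability argument by hand over a Suslin scheme for $A$; your parenthetical ``or more safely the net-measure analogue'' points at this but does not supply the comparability or the regularity properties that make it work, and in a general metric space there is no canonical net.

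In Stage 2 you describe the intended nested sequence $K\supset K_1\supset K_2\supset\cdots$ with two-sided premeasure control and then state explicitly that the inductive construction of $K_n$ from $K_{n-1}$ is ``the heart of the matter and the main obstacle.'' That is accurate, but it means the proof is not given: simultaneously capping the $s$-weight of a cover from above while preserving a lower premeasure bound at every scale is precisely where Besicovitch's and Howroyd's technical work lies, and outside Euclidean space the density and Vitali-type tools you invoke must themselves be constructed (this is the point of Howroyd's weighted measures). The limiting step you do describe (upper bound by compactness of open covers, lower bound by continuity of $\iH^s_\delta$ from above on compacta) is fine, but it is the easy part. As written, the proposal is a correct road map to the literature rather than a proof; for the purposes of this paper the statement should remain a cited black box.
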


Now we are ready to prove the main result of the paper.

\begin{theorem}\label{analytic}
Let $A$ be an analytic subset of a separable complete metric space $(X,d)$,
and let $k$ be a positive integer.
If $\hdim A>k$ then $A$ can be mapped onto the
$k$-dimensional cube $[0,1]^k$ by
a Lipschitz map.
\end{theorem}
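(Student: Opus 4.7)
The plan is to funnel the problem into the compact monotone case already handled by Corollary~\ref{c:mon_to_cube}, using Howroyd's theorem to pass from analytic to compact, and the Mendel--Naor theorem to pass from compact to (bi-Lipschitz copy of) ultrametric.

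Pick $s$ with $k<s<\hdim A$. By definition $\iH^s(A)=\infty$, so Howroyd's theorem yields a compact $K\subset A$ with $0<\iH^s(K)<\infty$; in particular $\hdim K\ge s>k$. Fix $\eps>0$ so small that $(1-\eps)s>k$, and apply Theorem~\ref{t:MN} to $K$ to obtain a closed (hence compact) subset $Y\subset K$ with $\hdim Y\ge(1-\eps)\hdim K>k$ such that $Y$ is bi-Lipschitz equivalent to some ultrametric space $(Z,\rho)$. Since bi-Lipschitz maps preserve Hausdorff dimension and compactness, $Z$ is a compact ultrametric space with $\hdim Z>k$, so in particular $\iH^k(Z)>0$.

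By Lemma~\ref{1-mon}, $(Z,\rho)$ is $1$-monotone, so Corollary~\ref{c:mon_to_cube} provides a Lipschitz surjection $Z\to[0,1]^k$. Precomposing with the bi-Lipschitz equivalence $Y\to Z$ gives a Lipschitz surjection $f:Y\to[0,1]^k$. It remains to promote $f$, defined on the compact subset $Y$ of $A$, to a Lipschitz surjection defined on all of $A$. Writing $f=(f_1,\dots,f_k)$ and applying McShane's extension theorem coordinatewise, we obtain a Lipschitz map $F=(F_1,\dots,F_k):X\to\R^k$ extending $f$. Composing with the Lipschitz coordinatewise clamp $r:\R^k\to[0,1]^k$, $r(y_1,\dots,y_k)=(\max(0,\min(1,y_i)))_i$, the map $r\circ F|_A:A\to[0,1]^k$ is Lipschitz; its image is contained in $[0,1]^k$ and contains $r\circ F(Y)=r([0,1]^k)=[0,1]^k$, so it is surjective.

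The genuinely difficult ingredients (Howroyd and Mendel--Naor) are imported as black boxes, so the main technical point is merely to verify that the dimension bound survives each reduction and that the Lipschitz map produced on a compact subset can be pushed to the whole of $A$ without losing surjectivity. The coordinatewise McShane extension followed by a Lipschitz retraction onto the cube handles the latter step cleanly; the choice of $\eps$ with $(1-\eps)s>k$ handles the former.
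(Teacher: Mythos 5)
Your proof is correct and follows essentially the same route as the paper: Howroyd's theorem to pass to a compact subset, Mendel--Naor to extract a piece bi-Lipschitz equivalent to an ultrametric space, then Lemma~\ref{1-mon} and Corollary~\ref{c:mon_to_cube}, and finally a coordinatewise Lipschitz extension (your explicit clamp onto $[0,1]^k$ is a welcome extra bit of care). The only, harmless, difference is that the paper applies Howroyd's theorem a second time to get a subset of finite positive $\iH^k$-measure before invoking Corollary~\ref{c:mon_to_cube}, whereas you note that $\hdim Z>k$ already forces $\iH^k(Z)=\infty>0$, which is all that corollary requires.
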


\begin{proof}
Let $s\in(k,\hdim A)$.
By the above Howroyd theorem, $A$ has a compact subset $C$ with finite
and positive $s$-dimensional Hausdorff measure.
By the above Mendel--Naor theorem, $C$ has a closed subset $E$ with
$\hdim E>k$ that is bi-Lipschitz equivalent to an ultrametric space.
Applying Howroyd's theorem again we get a compact subset $B$ of $E$ with
positive and finite $k$-dimensional Hausdorff measure.
Clearly, $B$ is also bi-Lipschitz equivalent to a compact ultrametric space $Y$.
By Lemma~\ref{1-mon} and Corollary~\ref{c:mon_to_cube},
$Y$ can be mapped onto $[0,1]^k$ by a Lipschitz map.
Since $B\subset A$ is bi-Lipschitz equivalent to $Y$,
this means that $B$ can be also mapped onto $[0,1]^k$ by a
Lipschitz map. Since real valued Lipschitz functions can be always extended
as Lipschitz functions, by extending the coordinate functions we get a
Lipschitz function that maps $A$ onto $[0,1]^k$.
\end{proof}

\begin{remark}
As we saw in the introduction,
in the above theorem the condition $\hdim A>k$ cannot be replaced by
the condition that $A$ has positive $k$-dimensional Hausdorff measure,
not even if $A$ is a compact subset of $\R^n$.
\end{remark}

\begin{remark}
The following argument shows that
if $(X,d)$ is the Euclidean space $\R^n$, or more generally if
it is a complete doubling metric space then we can also prove
Theorem~\ref{analytic}
without using the recent deep theorem of Mendel and Naor
(instead we use more classical theorems of Assouad and Mattila,
and we still need Howroyd's theorem).

First, let $A$ be an analytic set in $\R^n$ with $\hdim A>k$.
Let $C\subset A$ be compact with $\hdim C>k$.
Choose $S$ to be a self-similar set in $\R^n$ with the strong separation
condition
(which means that $S$ is the disjoint union of sets similar to $S$) with
$\hdim S>\max(n-(\hdim C-k), (n+1)/2)$.
By a theorem of Mattila \cite[Theorem 13.11]{MR1333890}
there exists
an isometry $\varphi$ such that $\hdim C\cap\varphi(S) > k$.
It is easy to check that a self-similar set with the strong separation
condition is bi-Lipschitz equivalent to an ultrametric space,
so $C\cap\varphi(S)$ is bi-Lipschitz equivalent
to a compact ultrametric space $X$.
Since ultrametric spaces are monotone this implies
by Corollary~\ref{c:mon_to_cube} that $C\cap\varphi(S)$ can be mapped onto
$[0,1]^k$ by a Lipschitz function.
By extending the Lipschitz function onto $A$ we get a
Lipschitz function that maps $A$ onto $[0,1]^k$.

By Theorem~\ref{t:Holder} we also get that $A$ can be mapped onto $[0,1]$
by an $s$-H\"older function for some $s>k$.
By the Assouad embedding theorem \cite{MR763553} for any doubling metric space
$(X,d)$ and $\eps>0$ the metric space $(X,d^{1-\eps})$ admits a bi-Lipschitz
embedding into a Euclidean space. Combining these results we get that if
$B$ is an analytic subset of a complete doubling metric space $(X,d)$
and $\hdim(B)>k$ then $B$ can be mapped
onto $[0,1]$ by a $k$-H\"older function. Then, composing this map with a
$\frac1k$-H\"older Peano curve as in the proof of Corollary~\ref{c:mon_to_cube},
we get a Lipschitz map from $B$ onto $[0,1]^k$.
\end{remark}

\section{Large metric spaces that cannot be mapped onto a segment}
\label{cannot}

The main result (Theorem~\ref{analytic}) of the previous section said that 
reasonably nice metric spaces of large Hausdorff dimension can be
always mapped onto large dimensional cube by a Lipschitz map. 
The following result, which is the main result of this section, shows that some
assumption on the metric space is necessary, even if we allow not only
Lipschitz functions but also uniformly continuous functions.

\begin{theorem}\label{t:sep}
There exist separable metric spaces with arbitrarily large Hausdorff dimension
that cannot be mapped onto a segment by a uniformly continuous function.
\end{theorem}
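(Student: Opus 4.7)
The strategy is to split according to the set-theoretic cardinal invariant $\cov(\iM)$, the smallest number of meager subsets of $\R$ needed to cover $\R$.

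\textbf{Case 1: $\cov(\iM) = \cont$.} For any prescribed $s > 0$ we plan to construct, by transfinite recursion of length $\cont$, a set $X \subset [0,1]^n$ (with $n > s$) of Hausdorff dimension at least $s$ such that no uniformly continuous real-valued function on $X$ has image containing a non-degenerate interval. The first observation is that a uniformly continuous $f \colon X \to \R$ extends continuously to the compact closure $\overline X$, and on $\overline X$ any continuous function is a uniform limit of Lipschitz functions; it therefore suffices to arrange $f(X) \not\supset [0,1]$ for every Lipschitz $f \colon \R^n \to \R$ with rational Lipschitz constant. Enumerate these Lipschitz functions as $(f_\alpha)_{\alpha < \cont}$. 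At stage $\alpha$, given $X_\alpha = \{x_\beta : \beta < \alpha\}$ and forbidden levels $(t_\beta)_{\beta<\alpha}$ already chosen, first select $t_\alpha \in [0,1]$ avoiding the at most $<\cont$ values $\{f_\alpha(x_\beta) : \beta < \alpha\}$, and then pick $x_\alpha \in [0,1]^n$ avoiding $\bigcup_{\beta \le \alpha} f_\beta^{-1}(t_\beta)$; since this is a union of $<\cont$ nowhere-dense sets, the hypothesis $\cov(\iM) = \cont$ guarantees such $x_\alpha$ exists. To secure the lower bound on $\hdim X$, one additionally insists that $x_\alpha$ belong to a fixed Frostman support of dimension $s$, and one verifies that this support is not swallowed by the $<\cont$ forbidden nowhere-dense sets.

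\textbf{Case 2: $\cov(\iM) < \cont$.} Set $\kappa := \cov(\iM) < \cont$. The plan is to exhibit a separable metric space $X$ of cardinality $\kappa$ with $\hdim X$ as large as we like; since $|X| < \cont = |[0,1]|$, the space $X$ admits no surjection of any kind onto a segment, in particular no uniformly continuous one. The dimension is obtained by constructing $X$ inside a fixed compact self-similar fractal $K \subset \R^n$ of Hausdorff dimension $s$ carrying a Frostman measure $\mu$. Using a meager cover of $\R$ of size $\kappa$ as a diagonalisation gadget, we run a transfinite induction of length $\kappa$ to select points $x_\alpha \in K$ so that the resulting $X = \{x_\alpha : \alpha < \kappa\}$ still supports a non-atomic Borel measure satisfying a Frostman condition of exponent $s$.

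\textbf{Main obstacle.} The principal difficulty lies in Case 1: at every stage the admissible set for $x_\alpha$ must simultaneously be nonempty, lie inside the prescribed Frostman support, and avoid all previously chosen nowhere-dense level sets — while still supporting enough Frostman mass to preserve the dimension lower bound in the limit. In Case 2, the technical heart is the construction of a set of cardinality $\kappa < \cont$ carrying a non-atomic Borel measure of Hausdorff dimension $s$; this is precisely where the hypothesis $\cov(\iM) < \cont$ is used in an essential way to diagonalise against the meager cover. The reduction from uniformly continuous to Lipschitz functions via extension to $\overline X$ and Lipschitz approximation on compact sets is standard and is not expected to pose serious difficulty.
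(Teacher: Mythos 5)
Your top-level strategy --- splitting on whether $\cov\iM=\cont$ or $\cov\iM<\cont$ --- is exactly the paper's, but both branches as you describe them have genuine gaps, and in each case the missing ingredient is a specific nontrivial theorem that transfers a category-type cardinal into a measure-type statement.

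In Case 1 there are two problems. First, the reduction to Lipschitz functions is invalid: if $f$ is a uniform limit of Lipschitz $f_k$ and each $f_k(X)$ omits a point of $[0,1]$, nothing prevents $f(X)\supset[0,1]$, since the omitted points may vary with $k$. This is easily repaired --- there are only $\cont$ continuous functions $\R^n\to\R$, so one diagonalizes against all of them directly, which is what the paper does. The serious problem is the selection step: you require $x_\alpha$ to lie in a fixed Frostman support $K$ of dimension $s<n$ while avoiding $<\cont$ nowhere dense subsets of $\R^n$. But $K$ is itself nowhere dense in $\R^n$, so $\cov\iM=\cont$ (a statement about covering $\R^n$) says nothing about whether the forbidden sets swallow $K$; and the level sets $f_\beta^{-1}(t_\beta)\cap K$ need not be nowhere dense \emph{in} $K$. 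More fundamentally, a category-based selection gives no control on Hausdorff dimension. The paper's resolution is to aim for a \emph{non-null} set (hence dimension $n$, which is arbitrarily large anyway): one chooses the levels $y_\beta$ so that $f_\beta^{-1}(y_\beta)$ is a closed \emph{null} set, also avoids the $\alpha$-th null Borel set at stage $\alpha$, and then needs that fewer than $\cont$ closed null sets plus one null set cannot cover $\R^n$. That this follows from $\cov\iM=\cont$ is the Bartoszy\'nski--Shelah theorem ($\cov\iM$ equals the least number of closed measure zero sets whose union is non-null); this category-to-measure transfer is the key idea your outline is missing and the unresolved ``main obstacle'' you flag.

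In Case 2 the gap is that a subset of a Euclidean fractal of cardinality $\cov\iM<\cont$ with Hausdorff dimension $\ge s$ need not exist. From $\cov\iM$ you only get (via Fremlin--Miller) a set of that cardinality which fails to be of \emph{strong measure zero}, i.e.\ $\iH^{\varphi_0}>0$ for \emph{some} gauge $\varphi_0$ --- typically a gauge far too degenerate to yield positive dimension, and the least cardinality of a positive-dimensional subset of $\R^n$ can exceed $\cov\iM$. The paper's way out is to abandon Euclidean space entirely: take such a non-strong-measure-zero set $H\subset\omega^\omega$ of cardinality $\cov\iM$ and \emph{remetrize} $\omega^\omega$ by $d_g(x,y)=g(|x\wedge y|+1)$ with $g=\varphi^{-1}\circ\varphi_0\circ g_0$, which converts $\iH^{\varphi_0}(H,d_{g_0})>0$ into $\iH^{\varphi}(H,d_g)>0$ for any prescribed gauge $\varphi$, in particular $\varphi(t)=t^s$. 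This metric-rescaling trick, unavailable inside a fixed fractal $K\subset\R^n$, is what makes Case 2 work; without it your construction of ``a non-atomic Borel measure satisfying a Frostman condition of exponent $s$'' on a set of size $<\cont$ has no justification.
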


We obtain the above theorem by proving the following two results.

\begin{theorem}\label{t:constr1}
If less than continuum many sets of first category cannot cover $\R$,
then for any $n$ there exists a set $A\su\R^n$ of Hausdorff dimension $n$
that cannot be mapped onto a segment by a uniformly continuous function.
\end{theorem}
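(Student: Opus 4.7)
My plan is to build $A\subset[0,1]^n$ by a transfinite recursion of length $\cont$, exploiting the hypothesis $\cov(\iM)=\cont$. I first reduce the statement: every uniformly continuous $f:A\to\R$ extends uniquely to a uniformly continuous function on $\overline A$, and then by the standard extension theorem to a uniformly continuous function $g:[0,1]^n\to\R$ (which is continuous, as $[0,1]^n$ is compact); since $g(A)=f(A)$, it suffices to arrange that for every continuous $g:[0,1]^n\to\R$, the image $g(A)$ has empty interior in $\R$, equivalently $I\not\subset g(A)$ for every open rational interval $I$. There are only $\cont$ continuous functions $g$ (each determined by its values on a countable dense set) and $\aleph_0$ rational intervals, so I fix an enumeration $\{(g_\alpha,I_\alpha):\alpha<\cont\}$ of all such pairs.

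At stage $\alpha<\cont$ I have on hand the points $A_\alpha:=\{a_\beta:\beta<\alpha\}$ and nowhere dense closed sets $\{F_\beta:\beta<\alpha\}$. I first pick $t_\alpha\in I_\alpha$ so that $F_\alpha:=g_\alpha^{-1}(t_\alpha)\cap[0,1]^n$ is nowhere dense and disjoint from $A_\alpha$: only countably many $t$ make $g_\alpha^{-1}(t)$ have nonempty interior (distinct level sets with nonempty interior would be pairwise disjoint nonempty open subsets of $[0,1]^n$, of which only countably many can coexist), and $|g_\alpha(A_\alpha)|\le|A_\alpha|<\cont=|I_\alpha|$, so such a $t_\alpha$ exists. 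I then pick $a_\alpha\in[0,1]^n\setminus\bigcup_{\beta\le\alpha}F_\beta$, which exists because the union is a family of $|\alpha|+1<\cont$ nowhere dense sets in the Polish space $[0,1]^n$, and by $\cov(\iM)=\cont$ cannot cover $[0,1]^n$.

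A direct check shows that $A\cap F_\alpha=\emptyset$ for every $\alpha$: if $\beta<\alpha$ then $a_\beta\in A_\alpha$ was deliberately kept out of $F_\alpha$ at stage $\alpha$, and if $\beta\ge\alpha$ then $a_\beta$ was chosen to avoid $F_\alpha$. Consequently $t_\alpha\notin g_\alpha(A)$ and so $I_\alpha\not\subset g_\alpha(A)$; as $\alpha$ ranges over all pairs, the image $g(A)$ contains no rational interval and hence has empty interior for every continuous $g$, which in particular rules out any non-degenerate closed interval.

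The chief technical obstacle is securing $\hdim A=n$: the bare recursion above gives no control over the dimension. I would address it by fixing an increasing sequence of perfect compact sets $K_j\subset[0,1]^n$ with $\hdim K_j>n-1/j$ (for instance, Cartesian products of fat Cantor sets admitting Frostman measures of the appropriate exponent), strengthening the first substep to demand that $F_\alpha\cap K_j$ be nowhere dense in each $K_j$ (still only countably many bad $t$'s per $K_j$, hence $\aleph_0$ altogether), and at cofinally many stages requiring $a_\alpha\in K_j$ by applying $\cov(\iM)=\cont$ inside the Polish space $K_j$. With sufficient care, $A\cap K_j$ forms a Luzin-type subset of $K_j$ of full Hausdorff dimension $\hdim K_j$, yielding $\hdim A\ge\hdim K_j>n-1/j$ for every $j$, and therefore $\hdim A=n$.
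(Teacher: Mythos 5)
The part of your construction that kills every continuous image is sound: the reduction to continuous $g:[0,1]^n\to\R$ via extension of uniformly continuous functions, the choice of $t_\alpha\in I_\alpha$ making $g_\alpha^{-1}(t_\alpha)$ nowhere dense and missing the points already chosen, and the diagonal argument showing $t_\alpha\notin g_\alpha(A)$ all work, and the point-picking step is a legitimate direct use of $\cov\iM=\cont$. The genuine gap is exactly where you flag it and then wave it away: a category-based recursion gives no lower bound on $\hdim A$, and the proposed repair does not close the hole. A set that avoids (or meets in a small set) each member of a family of nowhere dense subsets of $K_j$ --- a ``Luzin-type'' set --- has no tendency to have full Hausdorff dimension; on the contrary, classical Luzin sets have strong measure zero and hence Hausdorff dimension $0$. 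Worse, the covering principle you would need in order to add dimension requirements to the recursion (``one $\iH^s$-null set together with fewer than $\cont$ closed nowhere dense sets cannot cover $K_j$'') is outright false in ZFC: $K_j$ contains a dense $G_\delta$ set $N$ with $\iH^s(N)=0$, and $K_j\sm N$ is a countable union of closed nowhere dense sets. So ``sufficient care'' cannot be supplied within your framework.

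The paper resolves this by switching from category to measure at the crucial point: the value $y_\alpha$ is chosen so that the level set $f_\alpha^{-1}(\{y_\alpha\})$ is not merely nowhere dense but a closed set of Lebesgue measure zero (only countably many level sets of a continuous function can have positive measure), and the points $x_\alpha$ are additionally required to avoid an enumeration of the null Borel sets, so that $A$ ends up non-null and hence of Hausdorff dimension $n$. The price is that the point-picking step now needs the statement that fewer than $\cont$ many closed null sets plus one null set cannot cover $\R^n$; this does follow from $\cov\iM=\cont$, but only via the nontrivial theorem of Bartoszy\'nski and Shelah identifying $\cov\iM$ with the least number of closed measure zero sets whose union is not null, transferred from $2^\omega$ to $[0,1]^n$ by a measure-preserving continuous map. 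That transfer is the missing ingredient in your argument, and it is where the real content of the deduction from $\cov\iM=\cont$ lies.
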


\begin{theorem}\label{t:constr2}
If less than continuum many sets of first category can cover $\R$,
then there exist separable metric spaces of arbitrarily high Hausdorff dimension 
such that their cardinality is less than continuum, consequently they
cannot be mapped onto a segment by any funtion.
\end{theorem}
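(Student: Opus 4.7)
Since $|[0,1]|=\cont$, any function out of a set $Y$ of cardinality strictly less than $\cont$ has image of cardinality strictly less than $\cont$, and hence cannot surject onto a segment. The ``consequently'' clause of the theorem is therefore automatic, and the substance is to construct, under the hypothesis $\kappa:=\cov(\mathcal{M})<\cont$, a separable metric space $Y_n$ with $|Y_n|\leq\kappa$ and $\hdim Y_n\geq n$ for every positive integer $n$.

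My plan is to first produce a set $E\subseteq\R$ with $|E|\leq\kappa$ and $\hdim E$ arbitrarily close to $1$; in particular we can arrange $\hdim E>n/(n+1)$. Then take $Y_n:=E^{n+1}\subseteq\R^{n+1}$. The cardinality bound $|Y_n|=|E|^{n+1}\leq\kappa^{n+1}=\kappa<\cont$ is immediate from cardinal arithmetic, while iterating the standard product inequality $\hdim(A\times B)\geq\hdim A+\hdim B$ gives $\hdim Y_n\geq(n+1)\hdim E>n$, as required.

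The heart of the proof is thus the construction of $E$. Fix $s\in(n/(n+1),1)$ and a compact Cantor-type set $K\subseteq[0,1]$ of Hausdorff dimension slightly above $s$, carrying a Frostman $s$-measure $\mu$ with $\mu(B(x,r))\leq r^s$. Starting from the given cover $\R=\bigcup_{\alpha<\kappa}M_\alpha$ by meager sets (which, without loss of generality, may be taken closed nowhere dense), I would carry out a transfinite induction of length $\kappa$: at stage $\alpha$ pick $x_\alpha\in K$ lying outside an $\iH^s$-null ``capture'' set coding the $M_\beta$ for $\beta\leq\alpha$ together with the previously chosen points, in such a way that the final set $E=\{x_\alpha:\alpha<\kappa\}$ is not contained in any $\iH^s$-null Borel subset of $\R$ and hence has positive outer $\iH^s$-measure. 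The key input that drives the induction is a ZFC inequality relating $\cov(\mathcal{M})$ to the cardinal invariants of the $\sigma$-ideal of $\iH^s$-null subsets of $\R$, a result in the spirit of the third author's work (cf.~\cite{ZinCursed}). This translation from a purely topological hypothesis to a Hausdorff-measure-theoretic conclusion is the main obstacle; the product inequality, the cardinal arithmetic, and the final cardinality argument against surjection onto a segment are all routine.
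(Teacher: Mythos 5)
Your reduction is sound as far as it goes: the ``consequently'' clause is indeed a triviality about cardinality, the passage from a one-dimensional example to high dimensions via products and the inequality $\hdim(A\times B)\ge\hdim A+\hdim B$ is legitimate, and the cardinal arithmetic is fine. The gap is exactly the step you yourself flag as ``the main obstacle'', and it cannot be filled. You need a ZFC theorem of the form: if $\cov\iM<\cont$ then some set $E\subset\R$ of cardinality $<\cont$ satisfies $\iH^s(E)>0$ for a \emph{prescribed} $s>0$; that is, an inequality bounding $\mathrm{non}(\mathcal{E}_s)$ (the least cardinality of a non-$\iH^s$-null set of reals) by $\cov\iM$, or at least forcing it below $\cont$. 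No such inequality is available, and the Fremlin--Miller theorem --- the tool the paper actually uses --- gives strictly less: $\cov\iM$ is the least cardinality of a set that is not of \emph{strong measure zero}, so it yields $H$ of size $\cov\iM$ with $\iH^{\varphi_0}(H)>0$ for \emph{some} gauge $\varphi_0$, over which you have no control; $\varphi_0$ may tend to $0$ so slowly (e.g.\ logarithmically) that $H$ still has Hausdorff dimension $0$. Your transfinite induction also does not run as written: a cover of $\R$ by $\cov\iM$ meager sets gives you nothing to diagonalize against inside the ideal of $\iH^s$-null sets; for that you would need a covering family of that ideal of cardinality at most $\cov\iM$, and since every $\iH^s$-null set is Lebesgue null, such a family must have cardinality at least $\cov\iN$ (where $\iN$ is the Lebesgue null ideal), which exceeds $\cov\iM$ for instance in the random real model --- a model of the very hypothesis $\cov\iM<\cont$ you are working under. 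Finally, note that if your plan worked, then combined with Theorem~\ref{t:constr1} it would prove in ZFC statement (ii) of the paper's final section (separable metric spaces, indeed subsets of Euclidean space, of arbitrarily large finite Hausdorff dimension admitting no Lipschitz surjection onto a segment), which the authors explicitly list as open.

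The paper's proof avoids the obstacle by giving up on a Euclidean ambient space. It takes the Fremlin--Miller set $H\subset\omega^\omega$ of cardinality $\cov\iM$ with $\iH^{\varphi_0}(H,d_{g_0})>0$ for an uncontrolled gauge $\varphi_0$, and then \emph{remetrizes}: with $g=\varphi^{-1}\circ\varphi_0\circ g_0$ one gets $\iH^{\varphi}(H,d_g)=\iH^{\varphi_0}(H,d_{g_0})>0$ for the desired gauge $\varphi(t)=t^s$. The output is an abstract separable metric space of cardinality $\cov\iM<\cont$ and Hausdorff dimension at least $s$, with no need for products. The price --- that the example is no longer a subset of $\R^n$ --- is precisely the price your approach declines to pay, and as far as the authors know that cost is unavoidable.
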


The hypotheses of the above two theorems are abbreviated by $\cov\iM=\cont$ and
$\cov\iM<\cont$, where $\cont$ denotes the cardinality continuum, 
$\iM$ is the collection of subsets of $\R$ of
first category, and $\cov\iM$ stands for the least cardinality of a collection
of sets of first category that can cover $\R$. It is well known (see e.g. in 
\cite[Chapter 7]{MR1350295}) that both $\cov\iM=\cont$ and
$\cov\iM<\cont$ are consistent with the standard ZFC axioms of set theory. 
Note that the 
Continuum Hypothesis (CH) clearly implies $\cov\iM=\cont$ but it is also
consistent with ZFC that CH fails but $\cov\iM=\cont$ holds (see e.g. in 
\cite[Chapter 7]{MR1350295}). 

First we prove Theorem~\ref{t:constr1}.
The following result is probably known but for completeness we present a proof.

\begin{theorem}\label{ch}
If Continuum Hypothesis holds then for any $n$ there exists a set $A\subset\R^n$
such that for any continuous function $f:\R^n\to\R$ the set
$f(A)$ does not contain any interval
and $A$ is not a Lebesgue null set.

In fact, instead of the \emph{CH}, it is enough to assume the following hypothesis,
which clearly follows from \emph{CH}.
\begin{equation}\label{*}
\begin{minipage}{0.85\textwidth}
  Less than continuum many closed measure zero sets
  and a set of measure zero cannot cover $\R^n$.
\end{minipage}\tag{$\star$}
\end{equation}
\end{theorem}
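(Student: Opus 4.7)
My plan is to construct $A$ by transfinite recursion of length $\cont$, working under the hypothesis $(\star)$ (which is trivially implied by CH since then ``fewer than continuum'' collapses to ``countable'' and a countable union of closed null sets together with a null set is still null). I enumerate all continuous functions $\R^n\to\R$ as $(f_\alpha)_{\alpha<\cont}$, all Borel null subsets of $\R^n$ as $(N_\alpha)_{\alpha<\cont}$, and all rational open intervals of $\R$ as $(I_k)_{k<\omega}$. The only fact I need beyond the enumerations is the elementary observation that for each continuous $f:\R^n\to\R$ only countably many level sets $f^{-1}(y)$ have positive Lebesgue measure, since within any fixed bounded cube only finitely many disjoint level sets can have measure exceeding $1/m$.

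At stage $\alpha$, assuming $x_\beta$ and $y_{\beta,k}$ have been chosen for $\beta<\alpha$ and $k<\omega$, I first pick for each $k<\omega$ a value $y_{\alpha,k}\in I_k$ such that $f_\alpha^{-1}(y_{\alpha,k})$ is a closed null set and $y_{\alpha,k}\neq f_\alpha(x_\beta)$ for every $\beta<\alpha$; both requirements exclude fewer than $\cont$ points from $I_k$, so such a $y_{\alpha,k}$ exists. Then I select
\[
x_\alpha\in\R^n\setminus\Bigl(N_\alpha\cup\bigcup_{\beta\le\alpha,\,k<\omega}f_\beta^{-1}(y_{\beta,k})\Bigr).
\]
The large union here is a family of fewer than $\cont$ closed null sets (null by the choice of the $y_{\beta,k}$), and $N_\alpha$ is a single null set, so hypothesis $(\star)$ produces such an $x_\alpha$. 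I set $A=\{x_\alpha:\alpha<\cont\}$.

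To verify the conclusion, fix a continuous $f=f_\alpha$ and a rational interval $I_k$: the point $y_{\alpha,k}\in I_k$ cannot lie in $f_\alpha(A)$, because an equation $f_\alpha(x_\gamma)=y_{\alpha,k}$ with $\gamma<\alpha$ violates the defining property of $y_{\alpha,k}$, while $\gamma\ge\alpha$ violates the defining property of $x_\gamma$ applied to the indices $\beta=\alpha\le\gamma$ and $k$. Hence $f_\alpha(A)$ misses a point of every rational interval and therefore contains no interval. The fact that $A$ is not Lebesgue null is immediate from $x_\alpha\notin N_\alpha$: every Lebesgue null set is contained in some Borel null set appearing in the enumeration, so $A$ cannot be contained in any null set and thus has positive outer measure. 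The step I expect to require the most care is the bookkeeping that keeps the two halves of the recursion in sync, namely making sure every closed level set $f_\beta^{-1}(y_{\beta,k})$ appearing in the union at stage $\alpha$ is already known to be null at the moment it is used; that is precisely why the nullity requirement is loaded onto $y_{\alpha,k}$ at its own construction, so that later stages see only a union of known closed null sets to which $(\star)$ can be applied.
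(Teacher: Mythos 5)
Your proof is correct and follows essentially the same transfinite diagonalization as the paper: choose $x_\alpha$ outside $N_\alpha$ and outside the previously fixed closed null level sets using $(\star)$, and choose the $y$-witnesses using the fact that only countably many level sets of a continuous function can have positive measure. The only (immaterial) difference is bookkeeping: the paper picks a single $y_\alpha\in(0,1)$ per function and handles arbitrary intervals at the end by composing with linear maps (which appear in the enumeration), whereas you pick one witness $y_{\alpha,k}$ in each rational interval per function, avoiding that final reduction.
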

\begin{proof}
Let $\{f_{\alpha}:\alpha<\cont\}$ and $\{N_{\alpha}:\alpha<\cont\}$ be enumerations of the
collection of $\R^n\to\R$ continuous functions and the collection of
Lebesgue null Borel subsets of $\R^n$, respectively.

By transfinite induction for every $\alpha<\cont$
we construct points $x_{\alpha}\in\R^n$ and
$y_{\alpha}\in (0,1)$ such that
\begin{enumerate}
\item[(i)] $x_{\alpha}\not\in N_{\alpha}$,
\item[(ii)] $x_{\alpha}\not\in\bigcup_{\beta<\alpha}f_{\beta}^{-1}(\{y_{\beta}\})$,
\item[(iii)] $y_{\alpha}\not\in f_{\alpha}(\{x_{\beta} : \beta\le \alpha\})$ and
\item[(iv)] $f_{\alpha}^{-1}(\{y_{\alpha}\})$ has Lebesgue measure zero.
\end{enumerate}
In the $\alpha$-th step we suppose that (i)--(iv) hold for smaller indices.
Since for each $\beta<\alpha$ the set $f_{\beta}^{-1}(\{y_{\beta}\})$
is a closed set of measure zero, \eqref{*} implies that
we can choose $x_{\alpha}$ so that (i) and (ii) hold.
We can choose $y_{\alpha}\in(0,1)$ so that (iii) and (iv) hold
since more than countably many of the pairwise disjoint closed sets
$f_{\alpha}^{-1}(\{t\})$ $(t\in(0,1))$ cannot have positive measure.

Let $A=\{x_{\alpha}:\alpha<\cont\}$.
This set cannot have zero measure by (i).
If there exists a continuous function $f:\R^n\to\R$ such that
$f(A)$ contains an interval then the image of a linear transformation contains
$(0,1)$, so there exists an $\alpha<\cont$ for which $f_{\alpha}(A)\supset (0,1)$.
But this is impossible since $f_{\alpha}(x_{\beta})\neq y_{\alpha}$ for any
$\beta\le\alpha$ by (iii) and for any $\beta>\alpha$ by (ii), so
$y_{\alpha}\not\in f_{\alpha}(A)$.
\end{proof}

Since real valued uniformly continuous functions can be always extended
we get the following.

\begin{corollary}\label{cor:ch}
If Continuum Hypothesis (or~\eqref{*} of Theorem~\ref{ch})
holds then for any $n$ there exists a set $A\subset\R^n$
such that no subset of $A$ can be mapped onto a segment
by a uniformly continuous map and $A$ is not a Lebesgue null set,
so it has Hausdorff dimension $n$.
\end{corollary}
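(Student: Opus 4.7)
The plan is to derive the corollary directly from Theorem~\ref{ch} via a short extension argument. For the given $n$, I would first apply Theorem~\ref{ch} to obtain $A\subset\R^n$ that is not Lebesgue null and such that $f(A)$ contains no interval for any continuous $f:\R^n\to\R$. Since a non-null subset of $\R^n$ has positive $n$-dimensional Lebesgue measure, hence positive $n$-dimensional Hausdorff measure, this immediately yields $\hdim A = n$.

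Next, I would establish the non-mappability property. Suppose, toward a contradiction, that some $B\subset A$ admits a uniformly continuous map $g:B\to\R$ with $g(B)\supset[c,d]$ for some $c<d$. The idea is to promote $g$ to a continuous function on all of $\R^n$ and then invoke the defining property of $A$. To achieve this, first extend $g$ to the closure $\bar B$ by the standard uniform-continuity argument: every Cauchy sequence in $B$ is sent by $g$ to a Cauchy sequence in $\R$, producing a unique continuous extension $\bar g:\bar B\to\R$ that still satisfies $\bar g(\bar B)\supset g(B)$. Since $\bar B$ is closed in $\R^n$, Tietze's extension theorem furnishes a continuous $f:\R^n\to\R$ extending $\bar g$. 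Consequently $f(A)\supset g(B)\supset[c,d]$, contradicting the conclusion of Theorem~\ref{ch}.

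There is essentially no serious obstacle: both extension steps are classical, and the real work sits in Theorem~\ref{ch}. The only minor subtlety worth flagging is the two-stage extension — first to the closure using uniform continuity, then to $\R^n$ via Tietze — which is required because $B$ itself need not be closed. One could alternatively bypass the intermediate step by a McShane-type formula, taking a concave modulus $\omega$ that dominates the modulus of $g$ and defining $f(x)=\inf_{b\in B}\bigl(g(b)+\omega(d(x,b))\bigr)$ to obtain a uniformly continuous extension to $\R^n$ in one move; but for the purposes of this corollary continuity of the extension is all that is used, so the closure-plus-Tietze route is the most economical.
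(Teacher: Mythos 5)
Your proposal is correct and follows essentially the same route as the paper, which disposes of the corollary with the single remark that real-valued uniformly continuous functions can always be extended (to a continuous function on $\R^n$), after which Theorem~\ref{ch} applies verbatim. Your extra care about the two-stage extension (closure plus Tietze) versus a McShane-type formula is a reasonable elaboration of the same idea, not a different argument.
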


Corollary~\ref{cor:ch} together with the following result clearly completes
the proof of Theorem~\ref{t:constr1}. The result itself is known but we
have not found it in the literature in this exact form, so for completeness 
we show how it follows from published theorems.

\begin{theorem}
The hypothesis $\cov\iM=\cont$ implies \eqref{*}.
\end{theorem}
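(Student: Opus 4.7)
I will argue by contrapositive. Suppose that $[0,1]^n=\bigcup_{\alpha<\kappa}F_\alpha\cup N$ with $F_\alpha$ closed and $\lambda$-null, $N$ $\lambda$-null, and $\kappa<\cont$. First I will enlarge $N$ to a $G_\delta$ null superset $\tilde N=\bigcap_m U_m$ with $U_m$ open and $\lambda(U_m)\to 0$, and set $K_m=[0,1]^n\setminus U_m$. Each $K_m$ is closed with $\lambda(K_m)\to 1$ and is disjoint from $\tilde N$, hence $K_m\subset\bigcup_\alpha F_\alpha$ for every $m$.

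Next I will replace $K_m$ by its ``regular part'' $L_m$, namely the topological support of the Borel measure $E\mapsto \lambda(E\cap K_m)$. Standard properties of support give $L_m\subset K_m$ closed, $\lambda(L_m)=\lambda(K_m)>0$, and the crucial fact that \emph{every nonempty relatively open subset of $L_m$ has positive Lebesgue measure}; in particular $L_m$ is a perfect compact metric space. It will follow that each $F_\alpha\cap L_m$ is closed with empty interior in $L_m$, for otherwise an interior relatively open piece would be simultaneously of positive Lebesgue measure and contained in the null set $F_\alpha$. Hence $L_m=\bigcup_{\alpha<\kappa}(F_\alpha\cap L_m)$ is covered by $\kappa<\cont$ closed nowhere dense subsets of $L_m$.

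The final step is to conclude that no such cover exists. The cleanest route is to invoke the classical fact that $\cov\iM(X)=\cov\iM(\R)=\cont$ for every uncountable Polish space $X$; applied to $L_m$ this directly contradicts the cover just produced. The main obstacle will be justifying this invariance of $\cov\iM$ under passage to a general perfect Polish space. I plan to handle it in a self-contained fashion by carving out inside $L_m$ a Cantor set $T\cong\{0,1\}^\omega$ inheriting the full-support property from $L_m$: a standard Cantor scheme in which, at each step, a compact piece $P$ of positive measure is split into two disjoint compact subsets of positive measure (using continuity of $t\mapsto\lambda(P\cap\{x_1\le t\})$) and then further refined by a fine cube decomposition to force the diameters below $2^{-k}$. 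Applied to $T$, whose basic clopen pieces inherit positive Lebesgue measure by construction, the same nowhere-dense argument yields a cover of $T$ by $\kappa<\cont$ closed nowhere dense sets, which contradicts the well-known identity $\cov\iM(\{0,1\}^\omega)=\cov\iM(\R)=\cont$.
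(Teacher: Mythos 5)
Your argument is correct in substance but follows a genuinely different and in fact more elementary route than the paper. The paper first invokes the Bartoszy\'nski--Shelah theorem that $\cov\iM$ equals the least cardinality of a family of closed null subsets of $2^\omega$ whose union is not null, deduces the analogue of \eqref{*} in $2^\omega$, and then transfers it to $[0,1]^n$ via a continuous measure-preserving surjection $2^\omega\to[0,1]^n$. You instead give a direct ZFC argument for the contrapositive: any cover of $[0,1]^n$ by $\kappa$ closed null sets plus a null set yields, after passing to the support $L_m$ of Lebesgue measure restricted to a closed positive-measure set avoiding the null part, a cover of the perfect compact space $L_m$ by $\kappa$ closed nowhere dense sets, whence $\kappa\ge\cov\iM$. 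This avoids the deep Bartoszy\'nski--Shelah result entirely (it only proves that the union is not everything, rather than that it is null, but that is all \eqref{*} asserts), at the price of needing the standard fact that $\cov(\iM_X)=\cov(\iM_\R)$ for a perfect Polish space $X$ --- a fact of the same nature as the isomorphism $(\R,\iM_\R)\cong(2^\omega,\iM_{2^\omega})$ that the paper itself cites from Fremlin. (Minor point: state this for \emph{perfect} Polish spaces rather than uncountable ones; an isolated point lies in no meager set. Your $L_m$ is perfect, since its nonempty relatively open subsets have positive measure and hence are not singletons.)

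One detail in your self-contained fallback needs repair. In a binary Cantor scheme in which you force $\diam P_s\le 2^{-|s|}$ at every level, the basic clopen pieces of the limit set do \emph{not} automatically ``inherit positive Lebesgue measure'': $\lambda(T_s)=\lim_k\lambda\bigl(\bigcup_{t\supset s,\,|t|=k}P_t\bigr)$ can vanish, and indeed for a set whose measure is much smaller than the $n$-th power of its diameter, two children of half the diameter must lose almost all of the measure, so a strictly binary scheme with geometric diameter decay cannot keep the limit pieces non-null. The fix is to let each node have finitely many children given by a fine cube decomposition (slightly trimmed to make them compact and disjoint, with the total measure lost at level $k$ summable and small compared to every piece at that level); the limit set is then a perfect, zero-dimensional, compact metric space, hence homeomorphic to $2^\omega$ by Brouwer's theorem, and every nonempty relatively clopen piece has positive measure. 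With that adjustment (or by simply citing the invariance of $\cov\iM$ for perfect Polish spaces and skipping the Cantor set altogether) your proof is complete.
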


\begin{proof}
Let $2^\omega$ be the product space $\{0,1\}\times\{0,1\}\times\ldots$
and let us consider the natural 
uniformly distributed product probability measure on it.

By a well-known theorem of Bartoszy\'nski and Shelah 
\cite{BS} (see also  \cite[Theorem 2.6.14]{MR1350295})
$\cov\iM$ is also the least cardinality of a collection
of closed sets of measure zero in $2^{\omega}$ such that the union is not a 
set of measure zero. 
(In this theorem $\iM$ is meant on $2^\omega$ but it does not change $\cov \iM$
since $(\R,\iM_R)$ and $(2^{\omega},\iM_{2^\omega})$ are isomorphic, see e.g. 
\cite[522V(b)(i)]{Fremlin5}.)
By this theorem $\cov\iM=\cont$ implies that 
\begin{equation}\label{**}
\begin{minipage}{0.9\textwidth}
  in $2^{\omega}$ less 
  than continuum many closed measure zero sets
  and a set of measure zero cannot cover $2^{\omega}$.
\end{minipage}\tag{$\star\star$}
\end{equation}

We need to prove that this implies that the same is true
in $\R^n$. 
For this we need a continuous measure preserving
map $f:2^{\omega}\to[0,1]^n$. Suppose that we have such a function
and \eqref{*} is false, so there exists a decomposition $\R=N\cup \bigcup_{\alpha\in I} F_{\alpha}$, where $N$ is a set of measure zero, $I$ has
cardinality less than continuum, and each $F_{\alpha}$ is a 
closed set of measure zero. Then we get decomposition
$
2^\omega=f^{-1}(\R^n)=
f^{-1}(N)\cup \bigcup_{\alpha\in I} f^{-1}(F_{\alpha}),
$
which contradicts \eqref{**}.

So it remains to show a continuous measure preserving
map $f:2^{\omega}\to[0,1]^n$. This is well known and easy:
let the $j$-th coordinate of $f((a_k))$ be 
$\sum_{i=0}^\infty a_{in+j}2^{-i-1}$.
\end{proof}

\begin{remark}\label{r:Corazza}
Some set theoretical assumption is needed in
Theorem~\ref{t:constr1}, it
cannot be proved in ZFC:
Corazza \cite{MR982239} constructed a model in which every
positive Hausdorff dimensional subset of a Euclidean space 
can be mapped onto $[0,1]$ by a uniformly continuous function.
%
\end{remark}

Now it remains to prove Theorem~\ref{t:constr2}.

Let $\iH^{\varphi}$ denote the
Hausdorff measure with strictly increasing bijective gauge function
$\varphi:[0,\infty)\to[0,\infty)$.
A metric space $X$ is of \emph{strong measure zero} if $\iH^{\varphi}(X)=0$
for any gauge function $\varphi$.

We construct our example in $\omega^\omega$.
For $x\neq y$ in $\omega^\omega$ denote by $|x\wedge y|$ the length of
the common initial segment of $x$ and $y$. Note that, for any decreasing
function $g:\{1,2,\ldots\}\to(0,\infty)$ with $\lim_n g(n)=0$, by letting
$d_g(x,y)=g(|x\wedge y|+1)$ we get a separable metric space
$(\omega^\omega,d_g)$. One of the classical metrics on $\omega^\omega$ is
$d_{g_0}$ for $g_0(m)=1/m$.

The following theorem clearly implies 
Theorem~\ref{t:constr2}, so this will also complete
the proof of Theorem~\ref{t:sep}.

\begin{theorem}\label{t:cov}
For any gauge function $\varphi$
there exists a separable metric space $(X,d)$ of cardinality $\cov\iM$
with $\iH^{\varphi}((X,d))>0$.
\end{theorem}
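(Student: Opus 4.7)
Plan. My approach is to construct $X$ as a subset of the compact ultrametric space $K = \prod_n\{0, 1, \ldots, k_n-1\} \subset \omega^\omega$ equipped with the ultrametric $d_g$. Given the gauge $\varphi$, I fix a rapidly growing sequence $k_n \ge 2$ and set $g(n+1) = \varphi^{-1}(1/(n+1)^2)$, a decreasing sequence tending to $0$. Each cylinder $[s]\subset K$ at level $n$ then satisfies $\varphi(\diam[s]) = 1/(n+1)^2$, so that an $\iH^\varphi$-cover of any $Y \subset K$ by cylinders with total $\varphi$-mass less than $1/2$ corresponds to a sequence $(A_n)$ with $A_n \subset \prod_{m<n}\{0,\ldots,k_m-1\}$ and $\sum_n |A_n|/(n+1)^2 < 1/2$; I will call these \emph{small slaloms}.

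The goal is then to produce $X \subset K$ with $|X|=\cov\iM$ that \emph{evades} every small slalom, meaning that for each such $(A_n)$ some $x\in X$ satisfies $x\restriction n\notin A_n$ for every $n$. Granted such $X$, the positivity $\iH^\varphi(X) \ge 1/2$ follows immediately: any $\iH^\varphi$-cover of $X$ with total $\varphi$-mass less than $1/2$ refines, by ultrametricity of $d_g$, into a cover by cylinders $\{[s_i]\}$ with $\varphi(\diam[s_i])\le\varphi(\diam E_i)$; grouping by level gives a small slalom $(A_n)$, and the evasion property yields an $x\in X$ with $x\restriction n\notin A_n$ for all $n$, contradicting the assumption that the cylinders cover $X$. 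The existence of such $X$ of size $\cov\iM$ rests on a Bartoszy\'nski-style combinatorial characterization of $\cov\iM$: namely, that $\cov\iM$ is the least cardinality of a family in a fast-branching compact product that evades every small slalom.

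The main obstacle is precisely this combinatorial step. The standard Bartoszy\'nski--Shelah theorem identifies $\cov\iM$ with the least cardinality of a family in $\omega^\omega$ that ``infinitely often matches'' every target sequence, whereas our evasion is a ``for all levels'' statement about prefixes in $K$. Bridging the two requires a careful coding of each small slalom $(A_n)$ as a single target $g_{(A_n)}\in\omega^\omega$ arranged so that ``$f$ matches $g_{(A_n)}$ infinitely often'' becomes equivalent to ``$f$ is caught by $(A_n)$ at some level,'' after which the contrapositive of Bartoszy\'nski's theorem delivers the evading family of size $\cov\iM$ simultaneously avoiding every small slalom. The delicate part of the encoding is choosing $k_n$ to grow fast enough that the coding is faithful for all polynomially summable slaloms at once; once it is set up, the rest of the argument is routine.
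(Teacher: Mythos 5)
Your reduction of $\iH^\varphi$-covers to ``small slaloms'' via ultrametricity is fine, but the entire content of the theorem is then packed into the lemma you only assert: that $\cov\iM$ bounds the least cardinality of a family in $\prod_n\{0,\dots,k_n-1\}$ evading every slalom $(A_n)$ with $\sum_n|A_n|/(n+1)^2<1/2$. This is not the Bartoszy\'nski--Shelah theorem, and the bridge you sketch does not go through. Bartoszy\'nski's characterization of $\cov\iM$ concerns families that are eventually different from each single target value per level (width~$1$, and only for all but finitely many $n$), whereas your adversary may forbid on the order of $(n+1)^2$ prefixes at level $n$ and you must avoid all of them at \emph{every} level. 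A width-$(n+1)^2$ forbidden set per level cannot be faithfully coded as one forbidden value per level without re-blocking levels, and re-blocking destroys the ``for all $n$'' requirement on the original levels; moreover, evasion of slaloms whose widths satisfy a summability condition is an anti-localization property whose critical cardinal is in general a measure-theoretic invariant (of the type of $\mathrm{non}(\mathcal{N})$ or $\cov(\mathcal{N})$), and $\cov\iM<\mathrm{non}(\mathcal{N})$ is consistent with ZFC. So the inequality you need is at best a nontrivial open claim and quite possibly consistently false; as written, the proof has a hole exactly where the set theory happens.

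The paper sidesteps both difficulties. First, it works in the infinitely branching $\omega^\omega$ rather than a compact product: there, failure of strong measure zero is a \emph{width-one} evasion game (one forbidden cylinder per level), and the Fremlin--Miller theorem says precisely that the least non--strong-measure-zero subset of $(\omega^\omega,d_{g_0})$ has cardinality $\cov\iM$. Second, it does not try to hit the prescribed gauge $\varphi$ combinatorially at all: Fremlin--Miller only yields $\iH^{\varphi_0}(H,d_{g_0})>0$ for \emph{some} gauge $\varphi_0$, and the prescribed gauge is then obtained by changing the metric, setting $g=\varphi^{-1}\circ\varphi_0\circ g_0$ so that $\varphi\circ g=\varphi_0\circ g_0$ and hence $\iH^{\varphi}(H,d_g)=\iH^{\varphi_0}(H,d_{g_0})>0$. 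If you want to salvage your approach, you would either need to prove your slalom characterization of $\cov\iM$ (which I doubt is available), or incorporate this metric-reparametrization idea, at which point the compact product and the weight $1/(n+1)^2$ become unnecessary.
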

\begin{proof}
Fremlin and Miller \cite[Theorem 5]{MR954892} proved that $\cov\iM$ is also
the least cardinality of a subset of $(\omega^\omega,d_{g_0})$ that is not a
strong measure zero subspace. Therefore there exists $H\subset \omega^\omega$
of cardinality $\cov\iM$ such that $(H,d_{g_0})$ is not a strong measure
zero metric space, so
there exists a gauge function $\varphi_0$ such that $\iH^{\varphi_0}(H,d_{g_0})>0$.
Set $g=\varphi^{-1}\circ \varphi_0 \circ g_0$. Then $\varphi_0\circ g_0=\varphi \circ g$
and so  $\iH^{\varphi}(H,d_{g}) = \iH^{\varphi_0}(H,d_{g_0})>0$.
Therefore $(H,d_{g})$ is a
separable metric space of cardinality $\cov\iM$
with $\iH^{\varphi}((H,d_g))>0$.
\end{proof}

\section{Transfinite Hausdorff dimension}\label{sec:urb}

Urba\'nski \cite{MR2556034} introduced  the transfinite Hausdorff dimension ($\tHD$)
of a metric space $X$ in the following way:
\begin{equation}\label{tHD}
\tHD(X) = \sup\{\trind f(Y) :\text{$Y\subset X$, $f:Y\to Z$ Lipschitz, $Z$ a metric space}\},
\end{equation}
where $\trind$ denotes the transfinite small inductive topological dimension
(see e.g. in \cite{MR1363947}). He showed the following connection between
Hausdorff dimension and transfinite Hausdorff dimension.

\begin{theorem}[Urba\'nski {\cite[Theorem 2.8]{MR2556034}}]\label{t:urbanski}
If $X$ is a metric space with finite Hausdorff dimension then
$\tHD(X)\le\lfloor\hdim X\rfloor$, where
$\lfloor.\rfloor$ denotes the floor function.
\end{theorem}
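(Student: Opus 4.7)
The plan combines two ingredients. First, Lipschitz maps do not increase Hausdorff dimension, so any $f(Y)$ appearing in the definition of $\tHD(X)$ satisfies $\hdim f(Y)\le\hdim Y\le\hdim X$; this is a direct consequence of $\iH^s(f(E))\le \Lip(f)^s\iH^s(E)$. Second, a Szpilrajn-type inequality: for any metric space $W$ with $\hdim W<n+1$ one has $\trind W\le n$. Granting both, for every Lipschitz $f:Y\to Z$ with $Y\subset X$ we obtain $\trind f(Y)\le\lfloor\hdim X\rfloor$, and taking the supremum gives the theorem. Only the second ingredient requires real work.

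I would prove the second ingredient by induction on $n$. For the base case $n=0$: if $\hdim W<1$, then $d(x,W)\subset\R$ has Hausdorff dimension $<1$ (since $d(x,\cdot)$ is $1$-Lipschitz), hence Lebesgue measure zero. Thus arbitrarily small $r>0$ can be chosen with $\{y\in W:d(x,y)=r\}=\emptyset$, making $B(x,r)$ clopen, so $\trind W=0$. For the inductive step, fix $s$ with $\hdim W<s<n+1$, so that $\iH^s(W)=0$. The coarea (Eilenberg) inequality applied to the $1$-Lipschitz function $d(x,\cdot)$ yields
$$\int_0^\infty\iH^{s-1}\bigl(\{y\in W:d(x,y)=r\}\bigr)\,dr\le c_s\,\iH^s(W)=0,$$
so for almost every $r$ the ``sphere'' $\{y\in W:d(x,y)=r\}$ has $\iH^{s-1}$-measure zero, and hence Hausdorff dimension at most $s-1<n$. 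Choosing such $r$ arbitrarily small around each $x$, the open balls $B(x,r)$ form a neighborhood basis whose boundaries $\partial B(x,r)\subset\{y:d(x,y)=r\}$ have Hausdorff dimension $<n$; by the induction hypothesis $\trind \partial B(x,r)\le n-1$, and hence $\trind W\le n$.

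The main technical subtlety is that $X$, and therefore $W$, need not be separable, so Eilenberg's coarea inequality must be justified in that generality (using outer integrals, for instance). In the separable case, the second ingredient is essentially the classical Szpilrajn inequality $\dim W\le\hdim W$ combined with the identification $\trind W=\dim W$ for spaces of finite small inductive dimension, so no extra care is needed.
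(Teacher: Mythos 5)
The paper does not prove this statement; it is quoted verbatim from Urba\'nski's paper \cite{MR2556034} (Theorem 2.8) and used as a black box. Your argument is correct and is essentially the proof one finds in the cited source: combine the fact that Lipschitz maps do not increase Hausdorff dimension with the Szpilrajn-type bound $\trind W\le n$ whenever $\hdim W<n+1$, the latter proved by induction via the Eilenberg coarea inequality for the $1$-Lipschitz distance functions $d(x,\cdot)$ (which, for distance functions, holds in arbitrary metric spaces by the elementary covering argument, so the non-separability worry you raise is easily dispatched). No gaps.
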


After noticing
that equality is not always true (if $C$ is a Cantor set
with zero Lebesgue measure but Hausdorff dimension $1$
then $\tHD(C)=0)$
Urba\'nski stated the
following conjecture.

\begin{conjecture}[Urba\'nski {\cite[Conjecture 6.1]{MR2556034}}]\label{c:urbanski}
If $X$ is a metric space with finite Hausdorff dimension then
$\tHD(X)\ge \lfloor\hdim X\rfloor-1$.
Consequently
$\tHD(X)\in\{\lfloor \hdim X \rfloor - 1, \lfloor \hdim X \rfloor\}$.
\end{conjecture}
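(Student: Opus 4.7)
The plan is to deduce the conjecture for analytic subsets $A$ of complete separable metric spaces directly from Theorem~\ref{analytic}, and to observe that for arbitrary separable metric spaces the conjecture is consistently false, precluding a fully general proof. The engine is Theorem~\ref{analytic}: whenever $A$ is analytic and $\hdim A > k$ for some positive integer $k$, there is a Lipschitz surjection $A \to [0,1]^k$.

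Let $d := \hdim A$ and suppose first $d$ is finite. Set $k := \lfloor d \rfloor$ when $d$ is not an integer, and $k := d - 1$ when $d$ is a positive integer; then $d > k$. When $k \ge 1$ (equivalently $d > 1$), Theorem~\ref{analytic} furnishes a Lipschitz surjection $A \to [0,1]^k$, so definition~\eqref{tHD} gives $\tHD(A) \ge \trind [0,1]^k = k$. Combined with the upper bound $\tHD(A) \le \lfloor d \rfloor$ from Theorem~\ref{t:urbanski}, this forces $\tHD(A) \in \{\lfloor d \rfloor - 1, \lfloor d \rfloor\}$, the smaller value being possible only when $d$ itself is a positive integer. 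The residual cases $0 \le d \le 1$ are trivial since $\lfloor d \rfloor - 1 \le 0 \le \tHD(A)$. For $\hdim A = \infty$, applying Theorem~\ref{analytic} to each positive integer $k$ yields $\tHD(A) \ge k$ for all $k$, hence $\tHD(A) \ge \omega_0$.

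The main obstacle is that the conjecture is stated for arbitrary separable metric spaces, where it is consistently false. Under $\cov \iM < \cont$, Theorem~\ref{t:constr2} produces a separable metric space $X$ of arbitrarily large Hausdorff dimension with $|X| < \cont$. For any Lipschitz $f : Y \to Z$ with $Y \subseteq X$, the image $f(Y) \subseteq Z$ has cardinality below continuum, so for each $w \in f(Y)$ the distance function $d_Z(w, \cdot)$ attains fewer than continuum many values on $f(Y)$; choosing $r$ outside this set yields $\{v \in f(Y) : d_Z(w,v) < r\} = \{v \in f(Y) : d_Z(w,v) \le r\}$, a clopen neighborhood of $w$. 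Hence $\trind f(Y) = 0$ and $\tHD(X) = 0$, while $\hdim X$ is unbounded; the conjecture fails in this model. The sharpest positive result obtainable is therefore the analytic version sketched above.
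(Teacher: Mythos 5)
Your treatment of the positive half is exactly the paper's: applying Theorem~\ref{analytic} with $k=\lceil\hdim A\rceil-1$ gives $\tHD(A)\ge\lceil\hdim A\rceil-1$ for analytic $A$ (this is Theorem~\ref{conjholds}), and combining with the upper bound of Theorem~\ref{t:urbanski} yields Corollary~\ref{final}, including the $\tHD(A)\ge\omega_0$ case for $\hdim A=\infty$. That part is correct and matches the paper.

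The gap is in your claim that the original conjecture ``fails in this model'' where $\cov\iM<\cont$. Conjecture~\ref{c:urbanski} is restricted to spaces of \emph{finite} Hausdorff dimension, whereas the spaces produced by Theorem~\ref{t:constr2} (via Theorem~\ref{t:cov} with gauge $\varphi(t)=t^s$) are only known to satisfy $\hdim X\ge s$; nothing bounds their Hausdorff dimension from above, and they may have $\hdim X=\infty$, in which case they are simply outside the scope of the conjecture as stated. This is precisely why the paper lists ``the original form of Urba\'nski's conjecture is false'' as statement (iii) of its final section, \emph{open} in ZFC, and establishes it only under $\cov\iM=\cont$ --- the opposite horn of your dichotomy --- where Theorem~\ref{t:constr1} produces $A\subset\R^n$ with $\hdim A=n<\infty$ and, by Lemma~\ref{l:null}, $\tHD(A)=0$. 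The consistent refutation the paper actually records just before Theorem~\ref{conjholds} is different from yours: a set $X\subset\R^n$ of positive outer Lebesgue measure and cardinality less than $\cont$ (consistently such sets exist), which has $\hdim X=n$ finite while your cardinality argument (which is the paper's, and is fine) gives $\trind f(Y)=0$ for every Lipschitz image of a subset, hence $\tHD(X)=0$. So your overall conclusion --- true for analytic sets, consistently false in general --- is right, but the specific model and construction you invoke for the failure do not deliver a counterexample to the conjecture as stated; you need either $\cov\iM=\cont$ with Theorem~\ref{t:constr1}, or a small non-null subset of $\R^n$, instead.
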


It is not hard to see that one cannot prove this conjecture without some
additional assumptions:
It is well-known (see e.g.~in \cite{MR1350295}) that
for any $n$ the existence of a set $X\subset\R^n$ with positive Lebesgue outer
measure and cardinality less than continuum is consistent with ZFC.
Then $\hdim X=n$ but $\tHD(X)=0$ since any image of any subset of $X$ has
cardinality less than continuum
and it is easy to check that any set of cardinality
less than continuum must have zero topological dimension
(since we can easily find balls of arbitrarily small radius with empty boundary
around any point).

Theorem~\ref{analytic} immediately implies that if we have some reasonable
assumption about the metric space then even the following stronger form
of Conjecture~\ref{c:urbanski} holds.

\begin{theorem}\label{conjholds}
If $A$ is an analytic subset of a complete separable metric space then
$\tHD(A)\ge\lceil\hdim A\rceil-1$,
where $\lceil . \rceil$ denotes the ceiling function.
\end{theorem}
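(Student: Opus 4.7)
The plan is to derive the theorem as an immediate corollary of Theorem~\ref{analytic}. Write $n=\lceil\hdim A\rceil-1$. In the main case I assume $\hdim A$ is finite and $n\ge 1$. The elementary inequality $\lceil x\rceil-1<x$, valid for every real $x>0$, gives $n<\hdim A$, so Theorem~\ref{analytic} produces a Lipschitz surjection $f:A\to[0,1]^n$. Taking $Y=A$ and $Z=[0,1]^n$ in the definition~\eqref{tHD} of $\tHD$, and using the classical fact that $\trind[0,1]^n=n$, one obtains $\tHD(A)\ge n=\lceil\hdim A\rceil-1$, as required. Note that this works uniformly: whether $\hdim A$ is an integer $k$ (so that $n=k-1$ and we feed $k-1$ into Theorem~\ref{analytic}) or a non-integer (so that $n=\lfloor\hdim A\rfloor$), the input to Theorem~\ref{analytic} is a positive integer strictly less than $\hdim A$.

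The remaining cases are boundary cases and need only a remark. If $n\le 0$, i.e.\ $\hdim A\le 1$, the inequality $\tHD(A)\ge n$ is automatic, since every nonempty metric space has $\trind\ge 0$ and the empty case is vacuous. If $\hdim A=\infty$, then $\hdim A>k$ for every positive integer $k$, so Theorem~\ref{analytic} delivers a Lipschitz surjection $A\to[0,1]^k$ for each $k$; hence $\tHD(A)\ge k$ for all $k\in\N$, and therefore $\tHD(A)\ge\omega_0$, consistent with the statement announced in the abstract.

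There is no genuine obstacle: all the difficulty has been absorbed into Theorem~\ref{analytic}. The only verification points are the trivial inequality $\lceil x\rceil-1<x$ for $x>0$ (which guarantees that $n$ is a legitimate value for the integer $k$ in the hypothesis of Theorem~\ref{analytic}), the standard value $\trind[0,1]^n=n$ of the small inductive dimension of the $n$-cube, and the boundary considerations above. So rather than a plan with a hard step, this is bookkeeping; if one wanted to be maximally concise one could simply write ``apply Theorem~\ref{analytic} with $k=\lceil\hdim A\rceil-1$ whenever this is positive.''
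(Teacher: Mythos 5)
Your proposal is correct and matches the paper's intent exactly: the paper offers no separate proof, stating only that Theorem~\ref{analytic} ``immediately implies'' the result, and your reduction via $k=\lceil\hdim A\rceil-1<\hdim A$ together with $\trind[0,1]^k=k$ is precisely that immediate implication, with the boundary cases handled correctly.
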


Combining Theorems~\ref{t:urbanski} and \ref{conjholds} we get the
following.
\begin{corollary}\label{final}
Let $A$ be an analytic subset of a complete separable metric space.
\begin{itemize}
\item
If $\hdim A$ is finite but not an integer then
$\tHD(A)=\lfloor\hdim A\rfloor$,
\item
if $\hdim A$ is an integer then $\tHD(A)$ is $\hdim A$ or $\hdim A-1$,
and
\item
if  $\hdim A=\infty$ then $\tHD(A)\ge\omega_0$.
\end{itemize}
\end{corollary}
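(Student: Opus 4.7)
The plan is to derive each of the three bullets by sandwiching $\tHD(A)$ between the general upper bound of Theorem~\ref{t:urbanski} and the lower bound just established in Theorem~\ref{conjholds}, and to treat the case $\hdim A=\infty$ separately by invoking Theorem~\ref{analytic} directly, since Theorem~\ref{t:urbanski} is vacuous there.

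First, assuming $\hdim A$ is finite, I would combine the two theorems to obtain
\[
\lceil\hdim A\rceil-1\le\tHD(A)\le\lfloor\hdim A\rfloor.
\]
If $\hdim A$ is not an integer, then $\lceil\hdim A\rceil-1=\lfloor\hdim A\rfloor$, the two bounds collapse, and the first bullet follows. If $\hdim A$ is an integer, the same two bounds become $\hdim A-1$ and $\hdim A$, which gives exactly the two possibilities in the second bullet.

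For the third bullet, with $\hdim A=\infty$, I would return to the definition~\eqref{tHD}. For every positive integer $k$ we have $\hdim A>k$, so by Theorem~\ref{analytic} there is a Lipschitz map of $A$ (in particular of a subset of $A$) onto the cube $[0,1]^k$. Since $\trind[0,1]^k=k$, each such $k$ is admissible in the supremum defining $\tHD(A)$, yielding $\tHD(A)\ge k$ for every finite $k$.

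I do not expect any substantial obstacle. The only point that deserves explicit attention is that $\tHD$ takes values in the ordinals, so from $\tHD(A)\ge k$ for every $k<\omega_0$ one may take the supremum inside the ordinals to conclude $\tHD(A)\ge\sup_{k<\omega_0}k=\omega_0$, rather than stopping at the weaker statement "$\tHD(A)\ge n$ for each finite $n$". Once this is observed, the proof is essentially a formal consequence of Theorems~\ref{t:urbanski}, \ref{conjholds}, and \ref{analytic}.
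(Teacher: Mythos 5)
Your proposal is correct and follows essentially the same route as the paper: the first two bullets are exactly the sandwich between Theorem~\ref{t:urbanski} and Theorem~\ref{conjholds}, and the infinite-dimensional case is handled, as intended, by applying Theorem~\ref{analytic} for every finite $k$ and taking the supremum of ordinals. Your explicit remark about passing from ``$\tHD(A)\ge k$ for all finite $k$'' to ``$\tHD(A)\ge\omega_0$'' is a reasonable bit of extra care that the paper leaves implicit.
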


\begin{remark}
Corollary~\ref{final} is sharp in the sense that if $\hdim A=n\in\N$ then
$\tHD(A)$ can be either $n$ or $n-1$: the trivial $A=\R^n$ is clearly
an example for the former one and we claim that any $A$ with
zero $n$-dimensional Hausdorff measure is an example for the latter one.
It is well-known (see e.g. \cite{MR0006493}) that the topological dimension of
a metric space with zero $n$-dimensional Hausdorff measure is at most $n-1$.
Therefore
if $A$ has zero $n$-dimensional Hausdorff measure then the Lipschitz image of
any of its subsets also has zero $n$-dimensional Hausdorff measure, so
its topological dimension is at most $n-1$, which means that $\tHD(A)\le n-1$.
\end{remark}

Our final goal is to show (in ZFC) that Theorem~\ref{conjholds} and Corollary~\ref{final}
do not hold for a general separable metric space.
The proof of the following useful observation uses a well-known argument that
can be found for example in \cite{BBE} or~\cite[4.2]{MR2009767}.

\begin{lemma}\label{l:null}
For any metric space $X$,
$\tHD(X)=0$ if and only if $X$ cannot be mapped onto a
segment by a Lipschitz map.
\end{lemma}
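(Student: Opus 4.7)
The plan is to prove the biconditional by handling each direction separately, with all the content concentrated in the reverse implication.

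I would first dispatch the easy direction directly from the definition: if there exists a Lipschitz map $\phi:X\to\R$ with $\phi(X)=[0,1]$, then substituting $Y=X$ and $f=\phi$ into \eqref{tHD} yields $\tHD(X)\ge\trind[0,1]=1>0$. Contrapositively, $\tHD(X)=0$ rules out the existence of any such $\phi$.

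For the converse, I would assume $\tHD(X)>0$, fix a witness $Y\subset X$, a metric space $Z$, and a Lipschitz map $f:Y\to Z$ with $\trind f(Y)>0$, and set $W=f(Y)$. The central step, matching the argument attributed in the paper to \cite{BBE} and \cite[4.2]{MR2009767}, is to show that $\trind W>0$ forces $W$ to admit a $1$-Lipschitz surjection onto a non-degenerate closed interval. I would prove this by unpacking the inductive definition of $\trind$: there must exist a point $p\in W$ and some $\eps>0$ such that every open set $V$ with $p\in V\subset\{x\in W:d(p,x)<\eps\}$ has non-empty boundary in $W$. Applying this to the specific open balls $V=\{x\in W:d(p,x)<r\}$ for each $r\in(0,\eps)$, together with the inclusion $\partial_W V\subset\{x\in W:d(p,x)=r\}$, I would conclude that the $1$-Lipschitz function $d(p,\cdot):W\to[0,\infty)$ attains every value in $(0,\eps)$ (and the value $0$ at $x=p$). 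Truncating then yields the desired $1$-Lipschitz surjection $x\mapsto\min(d(p,x),\eps/2)$ from $W$ onto $[0,\eps/2]$.

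To complete the proof, I would compose this surjection with $f$ to obtain a Lipschitz surjection $Y\to[0,\eps/2]$, extend it to a Lipschitz map $X\to\R$ via McShane's extension theorem, and post-compose with the $1$-Lipschitz retraction $t\mapsto\max(0,\min(\eps/2,t))$ of $\R$ onto $[0,\eps/2]$. Since the resulting map still agrees with the original surjection on $Y$, it remains surjective onto $[0,\eps/2]$, providing the sought Lipschitz map from $X$ onto a segment. The main (essentially only) obstacle is the $\trind$-unpacking step, i.e.\ recognizing that positivity of $\trind$ at a point forces the distance function from that point to sweep through a full interval of values; the subsequent composition, extension and truncation are routine manipulations of Lipschitz maps.
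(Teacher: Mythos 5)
Your proposal is correct and follows essentially the same route as the paper's proof: both handle the easy direction from the definition, and for the converse both extract from $\trind f(Y)>0$ a point whose spheres of all small radii are nonempty, so that the truncated distance function $\min(d(p,\cdot),r)$ maps the image onto a segment, after which one composes with $f$ and applies Lipschitz extension. Your explicit unpacking of the inductive definition of $\trind$ and the final retraction onto $[0,\eps/2]$ merely spell out steps the paper leaves implicit.
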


\begin{proof}
It is clear that if $\tHD(X) = 0$ then X cannot be mapped onto a segment by
a Lipschitz map. To prove the converse suppose that $Y\subset X$ and $f:Y\to Z$
is a Lipschitz map
onto a metric space $(Z,\rho)$ of positive topological dimension.
The latter implies that there is $z_0\in Z$ and $r>0$ such that every sphere
$\{z\in Z:\rho(z_0,z)=s\}$ of radius $s\leq r$ is nonempty.
It follows that the Lipschitz function $g(z)=\min(\rho(z_0,z),r)$
maps $Z$ onto $[0,r]$.
Thus a Lipschitz extension of $g\circ f$ maps $X$ onto $[0,r]$.
\end{proof}

Combining Theorem~\ref{t:sep} and Lemma~\ref{l:null} we get the following.

\begin{theorem}
\label{t:last}
There exist separable metric spaces with zero transfinite Hausdorff dimension
and arbitrarily large Hausdorff dimension.
\end{theorem}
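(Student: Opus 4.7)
The plan is to combine the two results stated just above, namely Theorem~\ref{t:sep} and Lemma~\ref{l:null}, and simply observe that their conclusions fit together by the trivial inclusion of Lipschitz maps into uniformly continuous maps. So this is essentially a one-line deduction rather than a new argument, and I would not expect any genuine obstacle.

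More concretely, I would fix an arbitrary $s>0$ and apply Theorem~\ref{t:sep} to produce a separable metric space $(X,d)$ with $\hdim X \ge s$ that admits no uniformly continuous surjection onto any non-degenerate segment. Since every Lipschitz map is uniformly continuous, in particular $X$ admits no Lipschitz surjection onto a segment. Lemma~\ref{l:null} then applies directly: it equates $\tHD(X)=0$ with the impossibility of mapping $X$ onto a segment by a Lipschitz map. Hence $\tHD(X)=0$ while $\hdim X\ge s$, and since $s$ was arbitrary we obtain spaces of arbitrarily large Hausdorff dimension with vanishing transfinite Hausdorff dimension.

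The only thing worth double-checking is that the word ``segment'' in Theorem~\ref{t:sep} and in Lemma~\ref{l:null} is used in the same sense (a non-degenerate closed interval), which it is; otherwise one could rescale. Because of this, there is nothing delicate to verify, and no hidden step where the set-theoretic dichotomy from Section~\ref{cannot} needs to be revisited: Theorem~\ref{t:sep} already packages both the $\cov\iM=\cont$ and $\cov\iM<\cont$ cases into a single ZFC statement, so the proof of Theorem~\ref{t:last} is a clean corollary.
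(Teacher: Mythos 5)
Your proposal is correct and is exactly the paper's argument: the paper simply states that Theorem~\ref{t:sep} and Lemma~\ref{l:null} combine to give the result, and your spelled-out version (Lipschitz maps are uniformly continuous, hence no Lipschitz surjection onto a segment, hence $\tHD(X)=0$ by the lemma) fills in precisely the intended one-line deduction. No gaps.
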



\section{Further directions for research}

There are (at least) two possible natural directions of 
future research:

1. We still do not know the answer to the first question of
the introduction: 
Which compact metric spaces $X$ can be mapped onto a $k$-dimensional cube by a Lipschitz map? 

Our Theorem~\ref{compact} gives that $\hdim X>k$ is a 
sufficient condition but this is clearly not a necessary condition (the $k$-dimensional cube itself is a trivial counter-example). 

As we saw in the introduction, the trivial necessary condition that the $k$-dimensional Hausdorff measure $\iH^k(X)$ is positive is not sufficient, 
not even if $X$ is a compact subset of $\R^n$.
However, for $k=n$ this might be sufficient: 
it is a
well-known long standing conjecture of Laczkovich \cite{MR1147388} that a
measurable set
with positive Lebesgue measure in $\R^n$ can be always mapped onto
an $n$-dimensional cube by a Lipschitz map.
This is known to be true for $n\le 2$
(see \cite{MR2185733} and \cite{MR1425223}),
and there is recent (still unpublished) progress for $n\ge 3$ due to
Cs\"ornyei and Jones.

Our Corollary~\ref{c:mon_to_cube} gives that for compact
monotone metric spaces $X$ 
(in particular for compact ultrametric spaces)
the condition $\iH^k(X)>0$ is necessary
and sufficient. 
It would be nice to know more about 
those compact metric spaces for which the 
condition $\iH^k(X)>0$ is necessary
and sufficient. A full characterization of these spaces
seems to be hopeless since, as we saw above, we do not
even know if the compact subspaces of $\R^k$ have this property. 
The other difficulty is that it is hard to construct
compact metric spaces that do not have this property,
only the two ingenious constructions mentioned in
the introduction are known:
the construction of Vitushkin, Ivanov and Melnikov \cite{MR0154965} (see also \cite{MR1313694}) is really
complicated, the unpublished construction of Konyagin 
is not simple either.
It would be nice to have simpler examples.

2. Although the main results Theorem~\ref{t:sep},
Corollary~\ref{final} and Theorem~\ref{t:last} 
of Sections \ref{cannot} and \ref{sec:urb}  do not
depend on any set theoretical hypothesis (in other words
these results are proved in ZFC), there are several 
statements that we cannot prove in ZFC, only under some
hypothesis. 
As it is explained in Remark~\ref{r:Corazza}, Theorem~\ref{t:constr1}
cannot be proved in ZFC.
But for the following weaker and weaker statements
we do not know
if they can be proved in ZFC or they are independent of ZFC
(which is the case if the negation of the statement is
also consistent with ZFC). 

\begin{enumerate}

\item[(i)]\label{s1}
For any $n$ there exists a set $A\subset\R^n$
with Hausdorff dimension $n$ that cannot be mapped
onto a segment by a Lipschitz function.

\item[(ii)]\label{s2}
There exist separable metric spaces with arbitrarily
large finite Hausdorff dimension that cannot be mapped
onto a segment by a Lipschitz function.

\item[(ii')]\label{s3}
There exist separable metric spaces with zero transfinite Hausdorff dimension and arbitrarily
large finite Hausdorff dimension. 

\item[(iii)]\label{s4}
The original form (Conjecture~\ref{c:urbanski}) of Urbanski's conjecture is false.

\end{enumerate}

Note that (ii) $\Leftrightarrow$ (ii') holds by Lemma~\ref{l:null},
the implications (i) $\Rightarrow$ (ii) and
(ii') $\Rightarrow$ (iii) are clear, 
and by Theorem~\ref{t:constr1} $\cov\iM=\cont$ 
(so in particular CH as well) implies 
all four statements.

%
%
%

\section*{Funding}
This work was supported by the Hungarian Scientific Research Fund [72655 to T.K. and A.M.]; the Engineering and Physical Sciences Research Council [EP/G050678/1 to A.M.]; 
and the Department of Education of the Czech Republic [research project BA MSM 210000010 to O.Z.].

\section*{Acknowledgement}
We are indebted to M\'arton Elekes for some illuminating discussions and to the anonymous referees for valuable comments.


\begin{thebibliography}{00}

\bibitem{MR2185733}
Giovanni Alberti, Marianna Cs{\"o}rnyei, and David Preiss, \emph{Structure of
  null sets in the plane and applications}, European {C}ongress of
  {M}athematics, Eur. Math. Soc., Z\"urich, 2005, pp.~3--22. \MR{2185733
  (2007c:26014)}

\bibitem{MR763553}
Patrice Assouad, \emph{Plongements lipschitziens dans {${\bf R}^{n}$}}, Bull.
  Soc. Math. France \textbf{111} (1983), no.~4, 429--448. \MR{763553
  (86f:54050)}

\bibitem{BBE}
Rich\'{a}rd Balka, Zolt\'{a}n Buczolich, and M\'{a}rton Elekes, \emph{A new
  fractal dimension: {T}he topological {H}ausdorff dimension}, submitted.

\bibitem{MR1350295}
Tomek Bartoszy{\'n}ski and Haim Judah, 
\emph{Set theory. On the structure of the real line}, A K Peters Ltd.,
  Wellesley, MA, 1995. \MR{1350295
  (96k:03002)}

\bibitem{BS}
Tomek Bartoszy{\'n}ski and Saharon Shelah, 
\emph{Closed measure zero sets},
Annals Pure and Applied Logic \textbf{58} (1992), 93--110. 

\bibitem{MR982239}
Paul Corazza, \emph{The generalized {B}orel conjecture and strongly proper
  orders}, Trans. Amer. Math. Soc. \textbf{316} (1989), no.~1, 115--140.
  \MR{982239 (90c:03042)}

\bibitem{MR1363947}
Ryszard Engelking, \emph{Theory of dimensions finite and infinite}, Sigma
  Series in Pure Mathematics, vol.~10, Heldermann Verlag, Lemgo, 1995.
  \MR{1363947 (97j:54033)}

\bibitem{Fremlin5}
David H.~Fremlin, \emph{Measure Theory}, Volume 5, Part I, Colchester, 2008.


\bibitem{MR1317515}
John D.~Howroyd, \emph{On dimension and on the existence of sets of finite
  positive {H}ausdorff measure}, Proc. London Math. Soc. (3) \textbf{70}
  (1995), no.~3, 581--604. \MR{1317515 (96b:28004)}

\bibitem{MR0006493}
Witold Hurewicz and Henry Wallman, \emph{Dimension {T}heory}, Princeton
  Mathematical Series, v. 4, Princeton University Press, Princeton, N. J.,
  1941. \MR{0006493 (3,312b)}

\bibitem{MR1313694}
Tam{\'a}s Keleti, \emph{A peculiar set in the plane constructed by {V}itu\v
  skin, {I}vanov and {M}elnikov}, Real Anal. Exchange \textbf{20} (1994/95),
  no.~1, 291--312. \MR{1313694 (95m:28006)}

\bibitem{MR1512805}
Andrey Kolmogorov, \emph{Beitr\"age zur {M}a\ss theorie}, Math. Ann.
  \textbf{107} (1933), no.~1, 351--366. \MR{1512805}

\bibitem{MR1147388}
Mikl{\'o}s Laczkovich, \emph{Paradoxical decompositions using {L}ipschitz
  functions}, Real Anal. Exchange \textbf{17} (1991/92), no.~1, 439--444.
  \MR{1147388 (93f:28008)}

\bibitem{MR1425223}
Ji{\v{r}}{\'{\i}} Matou{\v{s}}ek, \emph{On {L}ipschitz mappings onto a square},
  The mathematics of {P}aul {E}rd{\H o}s, {II}, Algorithms Combin., vol.~14,
  Springer, Berlin, 1997, pp.~303--309. \MR{1425223 (98g:28009)}

\bibitem{MR1333890}
Pertti Mattila, \emph{Geometry of sets and measures in {E}uclidean spaces},
  Cambridge Studies in Advanced Mathematics, vol.~44, Cambridge University
  Press, Cambridge, 1995, Fractals and rectifiability. \MR{1333890 (96h:28006)}

\bibitem{MendelNaor}
Manor Mendel and Assaf Naor, \emph{Ultrametric subsets with large {}hausdorff
  dimension}, 
to appear in Inventiones Mathematicae.
arXiv:1106.0879v3.

\bibitem{MR954892}
Arnold~W. Miller and David~H. Fremlin, \emph{On some properties of {H}urewicz,
  {M}enger, and {R}othberger}, Fund. Math. \textbf{129} (1988), no.~1, 17--33.
  \MR{954892 (89g:54061)}

\bibitem{milne}
Stephen~C.~Milne, 
\emph{Peano curves and smoothness of functions},
Adv. in Math. \textbf{35} (1980), no.~2, 129--157.
\MR{560132}

\bibitem{MR2822419}
Ale{\v{s}} Nekvinda and Ond{\v{r}}ej Zindulka, \emph{A {C}antor set in the
  plane that is not {$\sigma$}-monotone}, Fund. Math. \textbf{213} (2011),
  no.~3, 221--232. \MR{2822419}

\bibitem{NZOrder}
\bysame, \emph{Monotone metric spaces}, Order (2012), to appear.

\bibitem{MR2556034}
Mariusz Urba{\'n}ski, \emph{Transfinite {H}ausdorff dimension}, Topology Appl.
  \textbf{156} (2009), no.~17, 2762--2771. \MR{2556034 (2010m:54032)}

\bibitem{MR0154965}
A.~G. Vitu{\v{s}}kin, L.~D. Ivanov, and M.~S. Melnikov,
  \emph{Incommensurability of the minimal linear measure with the length of a
  set}, Dokl. Akad. Nauk SSSR \textbf{151} (1963), 1256--1259. \MR{0154965 (27
  \#4908)}

\bibitem{ZinHolder}
Ond{\v{r}}ej Zindulka, \emph{Mapping {B}orel sets onto balls
and self-similar sets by {L}ipschitz and
  nearly {L}ipschitz maps}, in preparation.

\bibitem{ZinCursed}
\bysame, \emph{Universal measure zero, large {H}ausdorff dimension, and nearly
  {L}ipschitz maps},
Fund. Math., to appear.

\bibitem{MR2009767}
\bysame, \emph{Small opaque sets}, Real Anal. Exchange \textbf{28} (2002/03),
  no.~2, 455--469. \MR{2009767 (2005i:28031)}

\end{thebibliography}

\providecommand{\bysame}{\leavevmode\hbox to3em{\hrulefill}\thinspace}
\providecommand{\MR}{\relax\ifhmode\unskip\space\fi MR }

\end{document}